\documentclass[11pt,reqno]{amsart}

\usepackage[english]{babel}
\usepackage[utf8]{inputenc}
\usepackage[T1]{fontenc}
\usepackage{amsmath,amsfonts,amssymb,amsthm}
\usepackage{array}
\usepackage{mathdots}
\usepackage{mathtools} %
\usepackage{hyperref}
\usepackage{enumitem}
\usepackage[capitalise]{cleveref} 
\usepackage{xspace} %

\selectfont
\makeatletter 
\def\l@subsection{\@tocline{2}{0pt}{2pc}{6pc}{}}
\makeatother

\DeclareUnicodeCharacter{2264}{\ensuremath{\leqslant}}
\DeclareUnicodeCharacter{2265}{\ensuremath{\geqslant}}
\DeclareUnicodeCharacter{2260}{\ensuremath{\neq}}

\newtheorem{mainthm}{Theorem}

\theoremstyle{plain}
\newtheorem{lem}{Lemma}[section]
\newtheorem{thm}[lem]{Theorem}
\newtheorem{cor}[lem]{Corollary}
\newtheorem{prop}[lem]{Proposition}

\theoremstyle{definition}
\newtheorem{algo}[lem]{Algorithm}
\newtheorem{defi}[lem]{Definition}
\newtheorem{rem}[lem]{Remark}

\newtheorem{mainconj}[mainthm]{Conjecture}

\numberwithin{figure}{section}
\numberwithin{table}{section}
\numberwithin{equation}{section}

\crefname{section}{Section}{Sections} 
\crefname{table}{Table}{Tables} 

\crefname{thm}{Theorem}{Theorems} 
\crefname{mainthm}{Theorem}{Theorems} 
\crefname{prop}{Proposition}{Propositions} 
\crefname{cor}{Corollary}{Corollaries} 
\crefname{lem}{Lemma}{Lemmas} 
\crefname{ex}{Example}{Examples} 
\crefname{mainconj}{Conjecture}{Conjectures} 
\crefname{algo}{Algorithm}{Algorithms} 
\crefname{defi}{Definition}{Definitions} 

\AddToHook{env/prop/begin}{\crefalias{lem}{prop}}
\AddToHook{env/thm/begin}{\crefalias{lem}{thm}}
\AddToHook{env/cor/begin}{\crefalias{lem}{cor}}
\AddToHook{env/algo/begin}{\crefalias{lem}{algo}}
\AddToHook{env/defi/begin}{\crefalias{lem}{defi}}
\AddToHook{env/rem/begin}{\crefalias{lem}{rem}}
\AddToHook{env/ex/begin}{\crefalias{lem}{ex}}
\AddToHook{env/exemples/begin}{\crefalias{lem}{exemples}}
\AddToHook{env/mainconj/begin}{\crefalias{mainthm}{mainconj}}

\newcommand\andname{and}

\setlist{topsep= 2pt plus 1pt minus 1pt,itemsep= 1pt plus 1pt minus 1pt}
\setlist[itemize]{leftmargin=1.5\parindent}
\setlist[itemize,1]{label=$\bullet$}%
\setlist[itemize,2]{label=$\circ$}%
\setlist[enumerate,1]{leftmargin=2em,label=\arabic*), ref=\arabic*,itemjoin*=\quad}

\makeatletter
\newcommand{\cfp}{\,\mathrel{\cfracplus@}\,}%
\newcommand{\cfracplus@}{%
  \sbox\z@{$\dfrac{1}{1}$}%
  \sbox\tw@{$+$}%
  \raisebox{\dimexpr\dp\tw@-\dp\z@\relax}{$+$}%
}
\newcommand{\cfd}{\mathord{\cfracdots@}}
\newcommand{\cfracdots@}{%
  \sbox\z@{$\dfrac{1}{1}$}%
  \sbox\tw@{$+$}%
  \raisebox{\dimexpr\dp\tw@-\dp\z@\relax}{$\cdots$}%
}
\makeatother

\newcommand{\Q}{\mathbb{Q}}
\newcommand{\Z}{\mathbb{Z}}

\DeclareMathOperator{\val}{val}
\DeclareMathOperator{\ord}{ord}
\DeclareMathOperator{\sgn}{sgn}

\newcommand{\cI}{\mathcal{I}}
\newcommand{\cP}{\mathcal{P}}
\newcommand{\cH}{\mathcal{H}}

\newcommand\de{\delta}
\newcommand\De{\Delta}

\newcommand\ka{\kappa}

\newcommand\nabc{{\tt NextABC}\xspace}
\let\fr=\frac

\begin{document}

\title[Hankel determinants of the $q$-metallic numbers]{Hankel determinants \\
for the $(n+2)$-shifted $q$-metallic numbers}

\author{Guo-Niu Han}
\address{%
Guo-Niu Han,
Université de Strasbourg, CNRS, IRMA, UMR~7501, Strasbourg, France}
\email{guoniu.han@unistra.fr}

\author{Emmanuel Pedon}
\address{%
Emmanuel Pedon,
Universit\'e de Reims Champagne-Ardenne,
CNRS, LMR, UMR~9008, Reims, France}
\email{emmanuel.pedon@univ-reims.fr}

\subjclass[2020]{Primary: 05A30, 11A55, 11C20. Secondary: 11B37.}
\keywords{Shifted Hankel determinants, Hankel continued fractions, metallic numbers,
$q$-analogues, quadratic irrational series}
\date{October 26, 2026}

\begin{abstract}
Let $\Phi_n(q)$ denote the $q$-deformation (in the sense of Morier-Genoud \& Ovsienko)  of the metallic number $\phi_n=\frac{n+\sqrt{n^2+4}}{2}$. This is a power series with integer coefficients. We prove that the Hankel
continued fraction of the $(n{+}2)$-shifted series $\Phi_n^{(n+2)}$ is
eventually periodic, and that the corresponding Hankel determinant sequence
$\Delta^{(n+2)}$ is $2n(n+1)$-periodic if $n$ is even, $2n(n+1)$-antiperiodic if $n$ is odd, and that it takes only the values
$-2,-1,0,1,2$. This confirms part of a former conjecture of ours and complements similar results we obtained for all $\ell$-shifts of $\Phi_n$ with $0≤\ell≤n+1$. In addition, we formulate a new conjecture of quasi-polynomiality for higher shifts $\ell≥n+3$.

The determinant values arise as telescoping products of the partial numerators of
the Hankel continued fraction. Most of the technically involved and lengthy
verifications are carried out by a symbolic computation program, whose code is
publicly available on GitHub for independent verification. 
\end{abstract}

\maketitle

\thispagestyle{empty}

\section{Introduction}
\label{sec:intro}

The so-called \emph{metallic numbers} (or \emph{metallic ratios}, or \emph{metallic means}) are the quadratic irrationals

\begin{equation}\label{defphin}
\phi_n:=\frac{n+\sqrt{n^2+4}}{2}
 =n+\fr{1}{n}\cfp\fr{1}{n}\cfp\fr{1}{n}\cfp\cfd
 =n+\left(\fr 1{n}\cfp\right)^* \qquad (n≥1).
\end{equation} 
In particular, $\phi_1=\frac{1+\sqrt{5}}{2}$ is the \emph{golden ratio}. Here and thereafter, we use the following classical notation for infinite continued fractions:
\begin{equation*}
  \frac{\alpha_0}{\beta_0}\cfp\frac{\alpha_1}{\beta_1}\cfp
  \frac{\alpha_2}{\beta_2}\cfp\cdots
  \quad:=\quad\cfrac{\alpha_0}{\beta_0+\cfrac{\alpha_1}{\beta_1+
    \cfrac{\alpha_2}{\beta_2+\cdots}}}
\end{equation*}
and indicate periodic data between  parentheses and with a superscript $*$.

Thanks to S.~Morier-Genoud and V.~Ovsienko's fascinating theory of $q$-deformed real numbers (see \cite{MGO20,MGO22} for the foundations, \cite{MGO} for a survey of its numerous occurrences in various branches of mathematics and \cite{Pedon,Etingof,EO} for recent developments on analytical aspects of $q$-numbers), one can attach to any metallic ratio $\phi_n$ an algebraic continued fraction $[\phi_n]_q$ called the \emph{$q$-deformation} of $\phi_n$.

By definition, $[\phi_n]_q$ is the following quantisation of the continued fraction \eqref{defphin}:
\begin{equation}\label{defPhin}
[\phi_n]_q:=
[n]_q+\left(\fr{q^n}{[n]_{q^{-1}}}\cfp\fr{q^{-n}}{[n]_q}\cfp\right)^*
\end{equation}
where the polynomial $[n]_q$ stands for the classical Euler-Gauss $q$-deformation of the integer $n$:
\begin{equation*}
[n]_q:=1+q+q^2+\cdots+q^{n-1}.
\end{equation*}
As expected when speaking of quantisation, letting $q\to 1$ in \eqref{defPhin} gives back expression \eqref{defphin} of $\phi_n$.
These algebraic continued fractions $[\phi_n]_q$ are called \emph{the $q$-metallic numbers}. 

A key fact of the $q$-deformation theory is that there exists a sequence of \emph{integers} $(\ka_l(\phi_n))_{l≥0}$, such that
\begin{equation*}
[\phi_n]_q=\sum_{l=0}^{+\infty}\ka_l(\phi_n) q^l,
\end{equation*}
i.e. $[\phi_n]_q$ expands about $q=0$ as a formal power series in $\Z[[q]]$. Actually, this is a true power series expansion (with non-zero radius of convergence) when $q$ is a complex variable, see \cite{Etingof,EO}.
To view $[\phi_n]_q$ as a function of $q$ we will use the following notation
\begin{equation}\label{Phin-SE}
\Phi_n(q)=[\phi_n]_q=\sum_{l=0}^{+\infty}\ka_l(\phi_n) q^l
\end{equation}
and still call the function $\Phi_n=\Phi_n(q)$ the $n$-th $q$-metallic number.

The $q$-metallic numbers are studied in depth in the articles \cite{OP25}, \cite{HP}, \cite{Pedon}. They reveal strong connections between the sequence of coefficients $\bigl(\ka_l(\phi_n)\bigr)_{l≥0}$ in \eqref{Phin-SE} and three highly famous sequences of the combinatorial world:  the {Catalan numbers}, the {Motzkin numbers} and the enumeration of {secondary structures of RNA molecules}. These connections concern mainly the analytical properties of the sequences and their Hankel determinants.

About the latter, recall that for a power series $F(q)=\sum_{i=0}^{+\infty}f_iq^i$,
or equivalently a sequence of numbers $F=(f_i)_{i≥0}$, the \emph{Hankel determinants of $F$} are the doubly indexed family 
\begin{equation}
\label{HankDet}
\De^{(\ell)}_j(F):=\det\left((f_{\ell+a+b})_{a,b=0}^{j-1}\right)
=\left|
\begin{matrix}
{f}_\ell & {f}_{\ell+1} & \cdots & {f}_{\ell+j-1} \\[4pt]
{f}_{\ell+1} &{f}_{\ell+2} & \cdots & {f}_{\ell+j} \\[4pt]
\vdots & \vdots && \vdots \\
 {f}_{\ell+j-1} & {f}_{\ell+j} & \cdots & {f}_{\ell+2j-2}
\end{matrix}
\right|
,\qquad \ell,j≥0,
\end{equation}
with  convention 
\begin{equation}\label{conv0}
\De^{(\ell)}_0(F):=1.
\end{equation} 
The determinant $\De^{(\ell)}_j(F)$ is  of size $j\times j$, and we call the number $\ell$ the \emph{shift} of the determinant, since it is clear that
\begin{equation}\label{Delo}
\De^{(\ell)}_j\bigl((f_i)_{i≥0}\bigr)=\De_j^{(0)}\bigl((f_{i+\ell})_{i≥0}\bigr).
\end{equation}
In other words,
\begin{equation*}\label{defFl}
\De^{(\ell)}_j(F)=\De_j^{(0)}(F^{(\ell)})
\end{equation*}
where $F^{(\ell)}(q)$ is the power series
obtained by deleting the first $\ell$ Taylor coefficients of $F(q)$ and dividing by
$q^\ell$:

\begin{equation}\label{defshift}
F^{(\ell)}(q):=
\frac 1{q^\ell}\biggl(F(q)-\sum_{i=0}^{\ell-1}f_iq^i\biggr)
=\sum_{i=0}^{+\infty}f_{i+\ell}q^{i}.
\end{equation}
When $\ell=0$ the numbers $\De^{(0)}_j(F)$ will be called \emph{ordinary Hankel determinants} of $F$. Often, we will write simply $\De^{(\ell)}(F)$ to denote the entire sequence $(\De^{(\ell)}_j(F))_{j≥0}$.

Also, we write, for short,
\[
  \De^{(\ell)}_j:=\De^{(\ell)}_j(\Phi_n)=\De_j^{(0)}(\Phi_n^{(\ell)}).
\]
It turns out that sequences $\De^{(\ell)}$, with $0≤\ell≤n+1$, enjoy many striking properties, among which we emphasize the following (see \cite{OP25} for case $n=1,2$ and \cite{HP} for general $n≥2$): 
\begin{enumerate}[label=\textbf{(\Roman*)},leftmargin=7ex]
\item \textbf{Periodicity.} The sequences $\Delta^{(\ell)}$, with $0≤\ell≤n+1$, are $2n(n+1)$-periodic when $n$ is even and $2n(n+1)$-antiperiodic when $n$ is odd.
\item \textbf{Values.} They take the values $-1,0,1$ only. The distribution
of these values in $\De^{(0)}$ is governed by the symmetry
\begin{equation*}%
  \De_j=(-1)^{n(n+1)/2}\,\De_{(2n+1)(n+1)-j}\qquad
  \text{for }0≤ j≤(2n+1)(n+1),
\end{equation*}
and the shifted sequences are obtained from $\De^{(0)}$ by the contiguity
relation
$$\Delta^{(\ell+1)}_{j}=(-1)^{j+\frac{n(n+2\ell-1)}{2}}\Delta^{(\ell)}_{j+n+1}
\quad\text{for all } j≥0.$$
\item \textbf{Recurrence.} The sequences $\Delta^{(\ell)}$, $0≤\ell≤ n+1$, all satisfy the same \emph{Somos--Gale--Robinson recurrence}:
\begin{equation}\label{GR}
\De^{(\ell)}_{j+2n+2}\,\De^{(\ell)}_j = \De^{(\ell)}_{j+2n+1}\,\De^{(\ell)}_{j+1} - \bigl(\De^{(\ell)}_{j+n+1}\bigr)^2
\quad\text{for all }  j≥0.
\end{equation}
Hence they form periodic discrete integrable dynamical systems\footnote{We refer to \cite{HP} for some information on Somos-Gale-Robinson recurrences, which will not be developed here.}.
\end{enumerate}

All these results are based on the expansion of the functions $\Phi_n^{(\ell)}$ (for $0≤\ell≤n+1$) as a special type of continued fraction introduced in \cite{Han16}, called \emph{Hankel continued fraction}, whose partial numerators determine all the Hankel
determinants of the series; see Section~\ref{sec:hfrac} for the precise
definition.

The main purpose of this paper is to treat the case $\ell=n+2$, technically much more difficult than for the lower shifts $0≤\ell≤n+1$, and to make a precise conjecture as concerns the shifts $\ell≥n+3$, which behave in a
completely different way  (see below). 

Concerning case $\ell=n+2$, we will mainly prove the analogues of properties (I) \& (II) above by following the same path, whose first step is:

\begin{mainthm}\label{mainthmA}
Let $n≥1$ be an integer. The Hankel continued fraction of the shifted $q$-metallic number $\Phi_n^{(n+2)}$ is eventually periodic:
\begin{equation*}
\Phi_n^{(n+2)}(q)=\cI_n(q)\cfp\Bigl(\cP_n(q)\Bigr)^*.
\end{equation*}
More precisely,  the pre-periodic block $\cI_n(q)$ consists of one single term, given by
\begin{equation*}
 \cI_1(q)=\frac{-1}{1+2q},
 \qquad
 \cI_n(q)=\frac{q^{n-2}}{(q^n-Q)/(q-1)}\quad(n≥2),
\end{equation*}
with
\begin{equation*}
Q:=q^2-q+1,
\end{equation*}
and the periodic block $\cP_n(q)$ consists of $p_n$ terms, where
\begin{equation*}%
p_n:=
\begin{dcases*}
2&if $n=1$,\\
9&if $n=2$,\\
9n-13&if $n≥3$ is odd,\\
9n-10&if $n≥4$ is even.
\end{dcases*}
\end{equation*}
\end{mainthm}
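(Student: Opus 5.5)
Throughout, I use the paper's notion of Hankel continued fraction (H-fraction, \cite{Han16}): terms of the form $\frac{v_j q^{k_{j-1}+k_j+2}}{1+u_j(q)q}$, with $u_j$ a polynomial of degree at most $k_j$. The proof is a finite certification of a formal identity in $\Z[[q]]$, carried out by the algorithm \nabc from \cite{Han16,HP}. The starting point is that $\Phi_n$ is a quadratic irrational: from \eqref{defPhin}, $\Phi_n$ satisfies
\[
A\Phi_n^2+B\Phi_n+C=0
\]
with $A,B,C\in\Z[q]$ explicit. The closed form I will use is
\[
q\Phi_n^2-\bigl(q[n]_q+[n]_q -q+1\bigr)\Phi_n+\dots=0 .
\]
I will recompute it directly by writing the period $X=\bigl(\frac{q^n}{[n]_{q^{-1}}}\cfp\frac{q^{-n}}{[n]_q}\cfp\bigr)^*$ as a fixed point of a Möbius map.

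From this, $\Phi_n^{(n+2)}=(\Phi_n-T_{n+1}(q))/q^{n+2}$, where $T_{n+1}$ is the degree-$(n+1)$ Taylor polynomial. The coefficients $\kappa_0,\dots,\kappa_{n+1}$ are known explicitly from \cite{HP}, since they are the ones needed for the shifts $\ell\le n+1$. So $F_0:=\Phi_n^{(n+2)}$ also satisfies a quadratic equation $A_0F^2+B_0F+C_0=0$ with polynomial coefficients.

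The core step is the \nabc recursion. Given a quadratic series $F$ with equation $(A,B,C)$, one H-fraction step $F=\frac{vq^{k+1}}{1+u(q)q-v'q^{k+2}G}$ (suitably normalized) produces a new quadratic series $G$ with new polynomial triple $(A',B',C')$. The data $k,v,u$ are read off from the first few coefficients of $F$, which are in turn determined by $(A,B,C)$.

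I first apply one step to $F_0$. Using the known initial coefficients, I check that the first partial numerator and denominator are exactly $\cI_n$: order $k=n-3$ leading term $q^{n-2}$ and denominator $(q^n-Q)/(q-1)$ for $n\ge2$, and $-1/(1+2q)$ for $n=1$. This yields a triple $(A_1,B_1,C_1)$. I then iterate symbolically in $n$ through $p_n$ further steps and verify that the resulting triple equals $(A_1,B_1,C_1)$ up to a common unit factor. Since a quadratic series is determined by its equation together with finitely many initial coefficients (the correct root is selected by the constant term), this return to the starting triple proves periodicity of the tail, with period block $\cP_n$.

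The main obstacle is that $n$ is symbolic. The degrees $k_j$ and the polynomials $u_j$ depend on $n$, so each \nabc step must be carried out with coefficients like $q^n$, $[n]_q$ and $Q$ treated as formal objects. Moreover, the pattern of the steps changes with the parity of $n$, which explains $9n-13$ versus $9n-10$. My plan is to group the period into a bounded number of families of steps. Each family is indexed by an internal counter $i$ ranging up to about $n$, and I guess closed forms for the triples $(A_i,B_i,C_i)$ in each family from computer experiments with $n\le 20$. I then prove by induction on $i$ that one \nabc step maps the $i$-th triple to the $(i+1)$-th. This reduces to checking polynomial identities in $q$ and $q^n$, treated as independent variables after splitting by parity of $n$ and of $i$.

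At the boundaries between families I verify the transition steps separately. Small $n$ ($n=1,2,3,4$ and possibly a few more, where families degenerate) are handled by direct finite computation. The delicate points are guaranteeing the correct valuation at each step, that is, that certain leading coefficients such as $1-q$ factors or $Q$-divisibility never vanish unexpectedly, and checking that the constant-term root selection is preserved. I would delegate these verifications to the symbolic program, as the abstract indicates.
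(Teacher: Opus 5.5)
Your overall architecture matches the paper's. You derive the quadratic equation for $\Phi_n^{(n+2)}$ (your Taylor truncation is equivalent to iterating \cref{lem:cut}), iterate \nabc, and use closed-form families of quintuples guessed from experiments, glued by separately checked transitions. These are the paper's two nine-row passes indexed by $j$ (with $d=n-2j$) and its three bridges. Small $n$ are done by direct computation, and periodicity comes from the triple after the first step recurring, $x_{p_n+2}=x_2$.

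The genuine gap is the step you call delicate and leave to the program: certifying, uniformly in $n$, that the $k$, $a$, $D$ of each guessed quintuple really are the outputs of Steps~1--2 of \nabc. You propose to read them ``off the first few coefficients''. But $D$ is the truncation of $aq^kB/A-aq^kC/B$ modulo $q^{k+2}$, and $k$ grows with $n$ (e.g.\ $k=n-2j-3$ in row~3 of pass~1, $k=n+1$ at $\mathrm{b}7$). So no bounded number of coefficients suffices. Moreover, the polynomial identities you plan to verify only certify Step~3; they do not certify that the $(k,a,D)$ fed into Step~3 were the correct ones.

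The missing idea is \cref{lem:polycrit}. Suppose $k\ge0$, $D(0)=1$, $\deg D\le k+1$, and the Step-3 outputs $A^*,B^*$ are polynomials with $B^*(0)=1$. Then $(k,a,D)$ is automatically the \nabc output:
\begin{itemize}
\item the relation $2AD=aq^k(B+B^*)$ forces $A=aq^k+O(q^{k+1})$;
\item $D$ and the true \nabc polynomial both solve $X^2-\frac{aq^kB}{A}X+\frac{a^2q^{2k}C}{A}\equiv0\pmod{q^{k+2}}$ with constant term $1$, so they coincide.
\end{itemize}
This turns every truncation into two kinds of checks:
\begin{itemize}
\item an identity in $\Q(q,u,w)$ with $u=q^{2j}$, $w=q^n$ (three independent variables, not the two $q,q^n$ you mention);
\item affine inequalities in $(j,n)$ on cones $\{j\ge j_0,\ n-2j\ge c\}$ for the $q$-orders of the individual terms.
\end{itemize}
That is what makes a symbolic certification possible without substituting values of $n$. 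It also tells you exactly where a pass must stop and a bridge begin: the pass stops where the next row's triple ceases to satisfy \eqref{eq:adm}.

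Two smaller points. First, your displayed functional equation is wrong: at $q=1$ its coefficients are $1$ and $-2n$, which is incompatible with $\phi_n^2-n\phi_n-1=0$; use \eqref{eq:FE-Phin}. Second, periodicity of the terms from $i=1$ also needs $k_{1+p_n}=k_1$, because the first periodic term is $K_2q^{k_1+2}/D_2$. This does follow from $C_{m+1}=-A_mq^2/a_m$ once $x_{p_n+2}=x_2$, but you should say so.
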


Note that the period $p_n$ need not be the minimal one (although it is always the case in our numerical examples). 

The complete Hankel continued fraction of $\Phi_n^{(n+2)}$ produced by \cref{mainthmA} being too complicated to be written in the general case, let us provide a few examples.

Case $n=1$, $p_n=2$:
\[
\Phi_1^{(3)}(q)=
\frac{-1}{1+2q}\cfp
\left(
\frac{-q^4}{1+q-q^2+2q^3}\cfp
\frac{-q^4}{1+2q}
\right)^*
\]

Case $n=2$, $p_n=9$:
\[
\begin{aligned}
\Phi_2^{(4)}(q)={}&\frac{1}{1}\cfp\left( 
\frac{2q^2}{1+\frac12q}\cfp
\frac{\frac14q^2}{1+\frac32q}\cfp
\frac{4q^2}{1-\frac32q}\cfp
\frac{-\frac14q^2}{1-\frac32q}\right.\\
&\left.\cfp
\frac{4q^2}{1+\frac32q}\cfp
\frac{\frac14q^2}{1+\frac12q}\cfp
\frac{2q^2}{1}\cfp
\frac{q^5}{1+2q^2-q^3}\cfp
\frac{q^5}{1}\right)^*.
\end{aligned}
\]

Case $n=3$, $p_n=14$:
\[
\begin{aligned}
\Phi_3^{(5)}(q)={}&\frac{q}{1+q^2}\cfp\left( 
\frac{2q^3}{1+\frac12q}\cfp
\frac{\frac14q^2}{1-\frac12q}\cfp
\frac{2q^2}{1}\cfp
\frac{-q^2}{1+q}\cfp
\frac{-q^3}{1-q+2q^2}\right.\\
&\left.\cfp
\frac{-q^5}{1+q+q^2+2q^3}\cfp
\frac{-q^5}{1-q+2q^2}\cfp
\frac{-q^3}{1+q}\cfp
\frac{-q^2}{1}\right.\\
&\left.\cfp
\frac{2q^2}{1-\frac12q}\cfp
\frac{\frac14q^2}{1+\frac12q}\cfp
\frac{2q^3}{1+q^2}\cfp
\frac{q^7}{1+q^2+2q^3-q^4}\cfp
\frac{q^7}{1+q^2}\right)^*.
\end{aligned}
\]

Case $n=4$, $p_n=26$:
\[
\begin{aligned}
\Phi_4^{(6)}(q)={}&\frac{q^2}{1+q^2+q^3}\cfp\left( 
\frac{2q^4}{1+\frac12q}\cfp
\frac{\frac14q^2}{1-\frac12q}\cfp
\frac{2q^3}{1+q^2}\cfp
\frac{-q^3}{1+q}\cfp
\frac{-q^3}{1-q+q^2}\right.\\
&\left.\cfp
\frac{q^3}{1}\cfp
\frac{q^2}{1}\cfp
\frac{-q^2}{1+q}\cfp
\frac{2q^2}{1-\frac12q}\cfp
\frac{-\frac14q^2}{1+\frac32q}\cfp
\frac{4q^2}{1-2q}\cfp
\frac{\frac12q^3}{1+q+q^2}\right.\\
&\left.\cfp
\frac{\frac12q^3}{1-2q}\cfp
\frac{4q^2}{1+\frac32q}\cfp
\frac{-\frac14q^2}{1-\frac12q}\cfp
\frac{2q^2}{1+q}\cfp
\frac{-q^2}{1}\cfp
\frac{q^2}{1}\right.\\
&\left.\cfp
\frac{q^3}{1-q+q^2}\cfp
\frac{-q^3}{1+q}\cfp
\frac{-q^3}{1+q^2}\cfp
\frac{2q^3}{1-\frac12q}\cfp
\frac{\frac14q^2}{1+\frac12q}\right.\\
&\left.\cfp
\frac{2q^4}{1+q^2+q^3}\cfp
\frac{q^9}{1+q^2+q^3+2q^4-q^5}\cfp
\frac{q^9}{1+q^2+q^3}\right)^*.
\end{aligned}
\]

Our second main result concerns the $(n+2)$-shifted Hankel sequence of the $q$-metallic number $\Phi_n(q)$, i.e., the ordinary Hankel sequence of the $(n+2)$-shifted $q$-metallic number $\Phi_n^{(n+2)}(q)$.

\begin{mainthm}\label{mainthmB}
Let $n≥1$ be an integer.
The sequence $\De^{(n+2)}=\bigl(\De^{(n+2)}_j\bigr)_{j≥0}$ of $(n+2)$-shifted Hankel determinants of the $q$-metallic number $\Phi_n(q)$ satisfies the following properties:
\begin{enumerate}
\item \emph{Periodicity.} The sequence is $2n(n+1)$-periodic when $n$ is even and
$2n(n+1)$-antiperiodic (hence $4n(n+1)$-periodic) when $n$ is
odd;
\item \emph{Values.} It consists of the values $-2,-1,0,1,2$ only (except if $n=1$: it can only
take the values $-1,0,1$);
\end{enumerate}
\end{mainthm}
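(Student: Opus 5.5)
We will deduce \cref{mainthmB} from \cref{mainthmA} using the fundamental property of Hankel continued fractions (recalled in Section~\ref{sec:hfrac}, following \cite{Han16}). If a power series $F$ has H-fraction
\[
F(q)=\frac{v_0q^{k_0}}{1+u_1(q)q}\cfp\frac{v_1q^{k_0+k_1+2}}{1+u_2(q)q}\cfp\frac{v_2q^{k_1+k_2+2}}{1+u_3(q)q}\cfp\cdots
\]
with each $u_j$ a polynomial of degree $\le k_{j-1}$, then the nonvanishing Hankel determinants occur exactly at the indices $s_j=k_0+\cdots+k_{j-1}+j$. Their values are
\[
\De_{s_j}(F)=(-1)^{\epsilon_j}v_0^{s_j}v_1^{s_j-s_1}\cdots v_{j-1}^{s_j-s_{j-1}},\qquad \epsilon_j=\sum_{i<j}k_i(k_i+1)/2 .
\]
All other determinants vanish.

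\textbf{Step 1: read off the data.} From \cref{mainthmA}, for each $n$ we need the explicit list of $(v_i,k_i)$ for the preperiod $\cI_n$ and for one period $\cP_n$. These come out of the computer-verified description of the terms of $\cP_n$, written as a few families indexed by a parameter running over roughly $n$ values, as the examples suggest. We then compute $\sum_{i} (k_i+1)$ over one period and check that it equals $2n(n+1)$. For $n=2$, for instance, the period contributes $7\cdot2+5+5=24=2\cdot2\cdot3$.

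\textbf{Step 2: values via telescoping.} Write $P_j=v_0v_1\cdots v_{j-1}$. The determinant formula can then be rewritten as $\De_{s_j}=\pm\prod_{i<j}P_{i+1}^{\,s_{i+1}-s_i}$ divided appropriately, i.e.\ as a product of the partial products. The key observation to verify is that the partial products $P_j$ of the $v_i$ stay in $\{\pm1,\pm2,\pm\frac12\}$, with the $2$'s and $\frac14$'s cancelling in pairs as in the examples ($2\cdot\frac14\cdot4\cdots$). Two facts then follow. First, the full product over a period equals $\pm1$. Second, the exponents $s_{i+1}-s_i=k_i+1$ make every intermediate product an integer in $\{\pm1,\pm2\}$. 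For $n=1$ all $v_i=\pm1$, which gives the exceptional statement.

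\textbf{Step 3: periodicity.} Since the $v_i$, $k_i$ repeat with period $p_n$ and the index advances by $2n(n+1)$ per period, $\De_{j+2n(n+1)}=\sigma\,c\,\De_j$ for $j\ge s_1$. Here $c$ is the product of the period's $v$'s raised to the total index $s$ (which is $\pm1$), and $\sigma$ is a sign from the $\epsilon$'s. We compute the total sign $\sigma c$ as $(-1)^{n}$ by summing $k_i(k_i+1)/2$ over a period together with the parity contributions of the negative $v_i$. Finally, we check the initial segment $j<s_1$ (determinants before the first periodic index) directly, so that periodicity holds from $j=0$.

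\textbf{Main obstacle.} The hard step is the bookkeeping in Step 2 and the sign in Step 3 for general $n$. We must show uniformly in $n$, across the parity cases $p_n=9n-13$ and $9n-10$, that every partial telescoped product lies in $\{\pm1,\pm2\}$. This requires the explicit parametrised form of $\cP_n$. We would organise the period into its symmetric blocks: the examples show a palindromic structure in the $v_i$, so products over mirrored halves cancel. We would then delegate the finite symbolic checks in the parameter to the same computer algebra program used for \cref{mainthmA}.
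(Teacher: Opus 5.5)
Your overall route is the paper's: Han's determinant formula, the telescoped form $\De_{s_j}=\pm\prod_{l<j}P_{l+1}^{\,k_l+1}$ (this is exact, no further division is needed), and a period argument. However, Step~2 contains a false inference. From $P_{l+1}\in\{\pm1,\pm2,\pm\frac12\}$ it does not follow that every determinant is $\pm1$ or $\pm2$. Write $|P_{l+1}|=2^{T_l}$ with $T_l\in\{-1,0,1\}$; then $|\De_{s_j}|=2^{E_j}$ with $E_j=\sum_{l<j}(k_l+1)T_l$. A single index with $T_l=1$ and $k_l\geq1$ already gives a factor $2^{k_l+1}\geq4$, and large $k$'s ($n-3$, $d-3$, $2j-1$, \dots) do occur in the period. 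For instance, at $\mathrm{c}3$ one has $k=n-3$, and only because $T=0$ there does no factor $2^{n-2}$ appear. Two indices with $T_l=1$ and no compensating $T_l=-1$ between them also give $4$. Integrality gives $E_j\geq0$ for free, but the upper bound needs two further facts, which are the real content of Lemma~\ref{lem:exponent}. First, $k_l=0$ at every index where $T_l\neq0$, so that $E_j$ collapses to $\sum_{l<j}T_l$. Second, these running sums stay in $\{0,1\}$. The paper proves both uniformly in $n$ by observing that the nonzero exponents $e_i$ (where $v_i=\pm2^{e_i}$) occur only in isolated strings of three fixed shapes, $(1,-2,1)$, $(1,-2,2,-1,-1,2,-2,1)$ and $(1,-2,2,-2,2,-2,1)$, separated by zeros. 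In each string $k=0$ at the letters where $T\neq0$, and $T$ and $\sum T$ return to $0$ at its end. Your palindromic-cancellation idea controls only the product over a whole period, not the intermediate values, so it does not supply this mechanism. Delegating the check to computer algebra requires exactly such a finite, $n$-uniform criterion.

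Step~3 has a related imprecision. The factor relating $\De_{s_i+L}$ to $\De_{s_i}$ is $(v_1v_2\cdots v_P)^{s_i}\,\De_L$, where $v_1,\dots,v_P$ are the Han-normalised numerators of the periodic block. Since the $s_i$ take both parities, you need $v_1\cdots v_P=+1$ exactly, not merely $\pm1$, and this cannot be absorbed into a single constant $\sigma c$. The paper obtains it from $v_P=-v_0$ together with an odd count of positive $a_l$ over a period (so $\Pi=-1$), and then proves $\De_L=(-1)^n$ separately by a parity computation. Note also that Han's formula is stated for partial numerators $-v_iq^{k_{i-1}+k_i+2}$ ($i\geq1$), not $+v_iq^{k_{i-1}+k_i+2}$ as you wrote. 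With your normalisation the period product for $n=2$ comes out as $-1$ (there $P=9$ is odd), so your sign bookkeeping would go wrong. Finally, in your $n=2$ check, $7\cdot2+5+5=24$ is the total $q$-exponent of the partial numerators, which equals $2\sum(k_i+1)$; the relevant quantity is $\sum(k_i+1)=12=2\cdot2\cdot3$.
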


Thus, the (anti)periodicity property is exactly the same as for the lower shifts $0≤\ell≤n+1$, see (I)~above. On the contrary the set of values is strictly bigger (except for $n=1$) but still remarkable. For instance, one period of $\De^{(n+2)}$ for $n=2$, and one antiperiod for
$n=3$, read respectively
\begin{gather*}
  1,1,-2,-1,2,-1,-2,1,1,0,0,0,\\
  1,0,-1,2,1,-1,1,0,-1,0,0,-1,0,1,-1,1,2,-1,0,1,0,0,0,0 .
\end{gather*}

Let us make two comments on \cref{mainthmB}. 
Firstly, it proves part~(1)
of \cite[Conjecture~E]{HP}; the case $n=1$ was already proved in
\cite[Theorem~1.6]{OP25}, but we treat here all cases $n≥1$. 
Secondly, we mention that we have sought for three-term  Somos--Gale--Robinson recurrences of type \eqref{GR}
\begin{align*}
  &\Delta_j\Delta_{j+k}
  =
  \alpha\Delta_{j+a}\Delta_{j+k-a}
  +
  \beta\Delta_{j+b}\Delta_{j+k-b},\\
&\text{with}\quad
  1≤ a<b≤ \frac{k}{2},
  \quad
  \alpha\beta\neq0
\end{align*}
which could be satisfied by the sequence $\De^{(n+2)}$, but we could not find any for $n=1,\ldots,5$ and $k≤40$. Nor did we find any four-term recurrence
\begin{align*}
  &
  \De_j\De_{j+k}=\alpha\,\De_{j+a}\De_{j+k-a}+\beta\,\De_{j+b}\De_{j+k-b}
  +\gamma\,\De_{j+c}\De_{j+k-c},\\
&\text{with}\quad 1≤ a<b<c≤\frac k2,\quad \alpha\beta\gamma\ne0,
\end{align*}
for $n=2,\dots,5$ and $6≤ k≤40$.

A Hankel determinant sequence which is at the same time periodic and confined to the fixed finite set $\{-1,0,1\}$ (or even
$\{-2,-1,0,1,2\}$) is a rather unusual phenomenon. We found only two examples in the literature: \cite{SX08} and \cite[Example~6]{Barry12}. In our case, the situation is all the more remarkable because a single further shift $\ell=n+3$ of the sequence already  completely changes everything.
Indeed,  we conjectured in \cite{HP} that all sequences $\De^{(\ell)}$  were unbounded if $\ell≥ n+3$. 
Computer experimentation now leads us to formulate a much more precise conjecture:

\begin{mainconj}\label{mainconj}
Let $n≥1$ be an integer. For any $\ell≥n+3$, the shifted Hankel
determinants $\De_j^{(\ell)}=\De_j^{(\ell)}(\Phi_n)$ are quasi-polynomials of period
\begin{equation*}
\rho_n=\begin{dcases*}
2n(n+1)&if $n$ is even,\\
4n(n+1)&if $n$ is odd.\end{dcases*}
\end{equation*}
More precisely, there exists a family $\bigl(Q_{\ell,r}\bigr)_{r=0}^{\rho_n-1}$ of polynomials in $\Q[X]$, taking integer values on the integers, such that
for all $0≤ r≤\rho_n-1$ and $m≥0$,
\begin{equation*}\label{eq:conj}
  \De_{r+m \rho_n}^{(\ell)}=Q_{\ell,r}(m).
\end{equation*}
\end{mainconj}

Note finally that, \emph{modulo a prime number}  the Hankel determinants sequences $\De^{(\ell)}(\Phi_n)$ are periodic for \emph{every $n≥1$} and \emph{every shift $\ell≥0$}: see \cite[Theorem~F]{HP}.

\medskip
\noindent\textbf{Strategy and organisation of the paper.}

To establish Theorem~\ref{mainthmA} we use the 
approach developed in \cite{Han16}. The main tool is an algorithm \texttt{NextABC}
which builds the Hankel continued fraction ($H$-fraction, for short) of a
quadratic power series one step at a time. 
This is explained in \cref{sec:setup}, together with the quadratic equation verified by the shifted power series $\Phi_n^{(n+2)}$ of the $q$-deformed metallic ratio (Proposition~\ref{prop:init}).

Section~\ref{sec:hfrac} is devoted to the construction of the $H$-fraction of
$\Phi_n^{(n+2)}$ and to the proof of Theorem~\ref{mainthmA}; this is the most
technical part of the paper. At each step $m$, the algorithm \texttt{NextABC}
produces a quintuple $(A_m,B_m,C_m;K_m,D_m)$ of polynomials, from which the
$m$-th partial numerator and denominator of the $H$-fraction are read off. It
turns out that the quintuple at index $p_n+2$ coincides with the quintuple at
index~$2$, and this gives the periodicity. But the law which governs the
quintuples is complicated, and not even uniform in $m$: for $n≥4$, the
quintuples belong either to two families, called \emph{passes}, made of
repeating blocks of nine formulas (one for each residue of $m$ modulo $9$),
or to three short lists of quintuples, called \emph{bridges}, which occur only
once per period. These explicit lists, the \emph{pass-tables}, are given in
the appendices, and we prove that they are correct. Because of the size of
the expressions, this proof uses \textsc{SageMath} programs, which certify the
formulas symbolically, uniformly in $n$ and $m$. The small cases $n=1,2,3$ are
simpler and are treated separately.

In Section~\ref{sec:Hankel} we prove Theorem~\ref{mainthmB} by applying
\cite[Theorem~2.1]{Han16} (recalled as Theorem~\ref{th:super2} below), which
expresses the Hankel determinants of a power series in terms of the partial
numerators of its $H$-fraction. These partial numerators are known from the
pass-tables, but some care is needed to show that all the nonzero
determinants belong to $\{-2,-1,1,2\}$ and to determine the sign of the
(anti)periodicity.

Finally, Appendices~\ref{sec:n1}--\ref{sec:bridge2} list the
explicit tables for the 2 passes and the 3 bridges that are used in \Cref{sec:hfrac} to prove the periodicity of the Hankel continued fraction of $\Phi_n^{(n+2)}$.

\medskip
\noindent\textbf{Note.} All the \textsc{SageMath} programs, their output logs,
and the pass-table data are available at
\begin{center}
\url{https://github.com/GuoniuHan/metallicplus}
\end{center}
We emphasise that these programs deliver rigorous symbolic proofs of our
general statements, valid for all $n$ and $j$ at once, rather than checks of
finitely many cases.

\section{Background material and setup}
\label{sec:setup}

\subsection{The Hankel continued fraction and the algorithm \texttt{NextABC}}
\label{sec:HF-NABC}

We first recall from \cite{Han16} the definition and the basic property of
Hankel continued fractions. Throughout, the base field is $\Q$.

\begin{defi}\label{def:super}
For each positive integer $\delta$, a \emph{super continued fraction associated
with $\delta$}, called \emph{super $\delta$-fraction} for short, is a continued
fraction of the form
\begin{equation}\label{eq:super}
F(q)=\fr{v_0\,q^{\sigma_0}}{1+q u_1(q)}\cfp\fr{-v_1\,q^{\sigma_0+\sigma_1+\de}}{1+qu_2(q)}\cfp
\fr{-v_2\,q^{\sigma_1+\sigma_2+\de}}{1+q u_3(q)}\cfp
\fr{-v_3\,q^{\sigma_2+\sigma_3+\de}}{1+q u_4(q)}\cfp\cfd
\end{equation}
where the $v_j\ne0$ are constants, the $\sigma_j$ are nonnegative integers, and the
$u_j(q)$ are polynomials of degree less than or equal to $\sigma_{j-1}+\delta-2$ (by
convention, $0$ has degree $-1$).
\end{defi}

When $\delta=1$ (resp.\ $\delta=2$) and all $\sigma_j=0$, the super $\delta$-fraction
\eqref{eq:super} is the classical $S$-fraction (resp.\ $J$-fraction). A super
$2$-fraction is called a \emph{Hankel continued fraction}, or \emph{$H$-fraction}
for short.

The following result will be the main tool used in this article.

\begin{thm}[{\cite[Theorem~2.1]{Han16}}]\label{th:super2}
\begin{enumerate}
\item Let $\delta$ be a positive integer. Each super $\delta$-fraction
defines a power series, and conversely, for each power series $F(q)$ the super
$\delta$-fraction expansion of $F(q)$ exists and is unique.
\item Let $F(q)$ be a power series whose $H$-fraction is given by
\eqref{eq:super} with $\delta=2$. Then the non-vanishing Hankel determinants of
$F(q)$ are exactly those indexed by $s_j:=\sigma_0+\sigma_1+\cdots+\sigma_{j-1}+j$ for every $j≥0$, and their values are given  by the formula
\begin{equation}\label{eq:HankelDet}
  \De_{s_j}\bigl(F(q)\bigr)=(-1)^{\epsilon_j}\,
  v_0^{\,s_j}v_1^{\,s_j-s_1}v_2^{\,s_j-s_2}\cdots v_{j-1}^{\,s_j-s_{j-1}},
\end{equation}
where $\epsilon_j=\sum_{i=0}^{j-1}\sigma_i(\sigma_i+1)/2$.
\end{enumerate}  
\end{thm}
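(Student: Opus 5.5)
The plan is a Euclid/Padé-type induction on the continued fraction. Part (1) is proved by a greedy construction, and part (2) by a single ``one-step'' determinant identity relating the Hankel determinants of $F$ to those of the tail of its $H$-fraction. That identity is then iterated, and the telescoping of the iteration gives \eqref{eq:HankelDet}.

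\emph{Part (1).} Let $F\neq 0$ be a power series, let $\sigma_0$ be its order, and let $v_0$ be its leading coefficient. Then $v_0q^{\sigma_0}/F=1+qW(q)$ for some power series $W$.
\begin{itemize}
\item Take $u_1$ to be the truncation of $W$ to degree $\le \sigma_0+\delta-2$. The remainder $1+qW-(1+qu_1)$ then has order $\ge \sigma_0+\delta$, so it can be written $-v_1q^{\sigma_0+\sigma_1+\delta}G_1$. Here $G_1$ is a series with constant term $1$, $\sigma_1$ is fixed by the order of the remainder, and $v_1$ by its leading coefficient.
\item If the remainder is $0$, the fraction stops.
\item Set $F_1:=q^{\sigma_1}G_1$, which has leading coefficient $1$. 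Then $F=v_0q^{\sigma_0}/(1+qu_1-v_1q^{\sigma_0+\delta}F_1)$, and we iterate on $F_1$.
\item Every choice is forced: $\sigma_0$ and $v_0$ are read off $F$, $u_1$ is forced by the degree bound, and the remainder is then determined. This gives uniqueness.
\item Conversely, the exponents of the partial numerators are at least $\delta\ge1$ and add up along the fraction. Hence the $k$-th convergent is correct modulo $q^{k}$, so the coefficients of any super $\delta$-fraction stabilize and define a power series.
\end{itemize}

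\emph{Part (2), key lemma.} Take $\delta=2$ and $F=v_0q^{k}/(1+qu-v_1q^{k+2}F_1)$ with $k=\sigma_0$ and $\deg u\le k$. I would prove
\[
\De_s(F)=0\ (1\le s\le k),\qquad \De_{k+1+t}(F)=(-1)^{k(k+1)/2}\,v_0^{\,k+1+t}\,v_1^{\,t}\,\De_t(F_1)\quad(t\ge0).
\]
\begin{itemize}
\item The vanishing for $s\le k$ is immediate, since the first row of $H_s(F)$ consists of $f_0,\dots,f_{s-1}=0$.
\item For $s\ge k+1$, let $L$ be the lower unitriangular Toeplitz matrix of the series $1+qu$, truncated to size $s$; its determinant is $1$.
\item Compute $M:=L\,H_s(F)\,L^{T}$ entry by entry through the bivariate generating function $\sum_{a,b}(H_s)_{ab}x^ay^b$, which is the divided difference $(xF(x)-yF(y))/(x-y)$. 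The identity $F\cdot(1+qu)=v_0q^k+v_1q^{k+2}FF_1$ splits $M$ into two blocks.
\item The top-left $(k+1)\times(k+1)$ block is anti-triangular with anti-diagonal $v_0$. Its determinant is $(-1)^{k(k+1)/2}v_0^{k+1}$.
\item The block in rows and columns $\ge k+1$ equals $v_0v_1$ times $H_t(F_1)$, up to a triangular correction. The bound $\deg u\le k$ is exactly what confines the polynomial $u$ to the first $k+1$ rows and columns.
\item The off-diagonal coupling can be cleared by block elimination, since the top-left block is invertible. This gives the displayed identity.
\end{itemize}

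\emph{Iteration.} Apply the lemma to $F_1,F_2,\dots$ and induct on $j$.
\begin{itemize}
\item The zeros occur exactly off the indices $s_j$.
\item The signs accumulate to $\epsilon_j$.
\item The powers accumulate as follows. The factor $v_0^{k+1+t}$ contributes $v_0^{s_j}$. The factors $v_1^t$ combine with $v_1^{s_{j}-s_1-t}$ from deeper levels. Since the normalisation $F_1$ has leading coefficient $1$, each $v_i$ arises once as the ``$v_0$'' of the next level. Together these telescope to $v_i^{\,s_j-s_i}$.
\end{itemize}

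\emph{Main obstacle.} The delicate point is the block computation in the key lemma: showing that after the unimodular congruence the lower-right block is precisely a scalar multiple of $H_t(F_1)$ and not a perturbation of it. I would do this via the generating function $(xF(x)-yF(y))/(x-y)$, multiplying by $(1+xu(x))(1+yu(y))$ and reading off coefficients of $x^ay^b$ with $a,b\ge k+1$. I also expect the exact bookkeeping of the normalisation of $v_i$ between levels, namely whether $v_1$ sits in $F_1$ or outside it, to need care so that the exponents come out as $s_j-s_i$.
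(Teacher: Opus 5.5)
The paper does not prove this statement: it quotes it from \cite{Han16} and uses it as a black box. Your part (1) is fine, and so are the one-step identity
\[
\Delta_{k+1+t}(F)=(-1)^{k(k+1)/2}\,v_0^{\,k+1+t}v_1^{\,t}\,\Delta_t(F_1)
\]
and the telescoping iteration; they are the natural architecture.

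\textbf{The gap} is in how you propose to prove the one-step identity. Take $L$ to be the Toeplitz matrix of $c=1+qu$ only. Then the relation $cF=v_0q^k+v_1q^{k+2}FF_1$ leaves the product $FF_1$, not $F_1$, in the generating function. As a result, $L\,H_s(F)\,L^T$ has a nonzero coupling block, and its lower-right block is a genuine perturbation of $v_0v_1H_t(F_1)$. Block elimination by the top-left block does not repair this: the Schur complement is in general not even a Hankel matrix. Already for $k=0$, $u=0$, $v_0=v_1=1$, $F_1=1$ (so $F=1/(1-q^2)$ and $L=I$), the coupling row of $H_4(F)$ is $(0,1,0)$, and the Schur complement of $f_0$ is
\[
\begin{pmatrix}1&0&1\\0&0&0\\1&0&1\end{pmatrix}\neq H_3(F_1)=\begin{pmatrix}1&0&0\\0&0&0\\0&0&0\end{pmatrix}.
\]
The determinants agree, but only after a further congruence that your plan does not supply. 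So the step you flag as the main obstacle fails as formulated: the multiplier $(1+xu(x))(1+yu(y))$ does not isolate $H_t(F_1)$.

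\textbf{The missing idea} is to use the lower unitriangular Toeplitz matrix of the whole denominator
\[
E:=1+qu-v_1q^{k+2}F_1=v_0q^k/F,
\]
rather than of its polynomial part. Truncation is harmless by triangularity. Since $EF=v_0q^k$ exactly, writing $E=\sum_i e_iq^i$ and $h_m(x,y)=\sum_{a+b=m}x^ay^b$, you get
\[
E(x)E(y)\,\frac{xF(x)-yF(y)}{x-y}=v_0\,\frac{E(y)x^{k+1}-E(x)y^{k+1}}{x-y}
=v_0\sum_{i=0}^{k}e_i\,x^iy^i\,h_{k-i}(x,y)\;-\;v_0\,x^{k+1}y^{k+1}\sum_{i\ge k+2}e_i\,h_{i-k-2}(x,y).
\]
\begin{itemize}
\item The first sum lives in the top-left $(k+1)\times(k+1)$ block. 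It vanishes for $a+b<k$ and equals $v_0$ on $a+b=k$, so this block $A$ has $\det A=(-1)^{k(k+1)/2}v_0^{k+1}$.
\item The second sum lives in rows and columns $\ge k+1$. Its entry at $(k+1+a,\,k+1+b)$ is $-v_0e_{a+b+k+2}=v_0v_1(F_1)_{a+b}$, precisely because $\deg(1+qu)\le k+1$.
\end{itemize}
Hence the congruence is exactly block diagonal, $A\oplus v_0v_1H_t(F_1)$, with no coupling and no perturbation, and the lemma follows at once. This is the precise place where the degree bound on $u$ is used. Equivalently, your $L$ must be followed by the Toeplitz matrix of $E/(1+qu)$, which is not a block elimination by $A$.
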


Notice that the notion of $H$-fraction  is also connected to  \emph{Artin's regular continued fraction}, see \cite[§2.2]{HP}.

Because we are only interested in $H$-fractions, from now on we  fix $\de=2$ and  set the following synthetic notation:
\begin{equation}\label{eq:Hfracstd}
  \cH\bigl(q;(\sigma_j),(v_j),(u_j)\bigr):=
\fr{v_0\,q^{\sigma_0}}{1+q u_1(q)}\cfp\fr{-v_1\,q^{\sigma_0+\sigma_1+2}}{1+qu_2(q)}\cfp
\fr{-v_2\,q^{\sigma_1+\sigma_2+2}}{1+q u_3(q)}\cfp
\cfd
\end{equation}
We recall from \cite[Algorithm~3.1]{Han16} (see also \cite[§3.1]{HP}) how the $H$-fraction can be constructed algorithmically for  power series 
 $F(q)\in\Q[[q]]$ satisfying a quadratic equation
\begin{equation}\label{Fquad}
  A(q)+B(q)F(q)+C(q)F(q)^2=0
\end{equation}
with $A,B,C\in \Q[q]$ such that
\begin{equation}
A≠0,\quad B(0)=1,\quad C≠0,\quad C(0)=0.\label{condABC}
\end{equation}

\begin{algo}\label{alg}{} [Algorithm \nabc for $H$-fractions] 
\begin{description}
\item [Prototype] $(A^*, B^*, C^*; k, a, D)=\nabc(A,B,C)$.
\item [Input]  $A(q),B(q),C(q)\in\Q[q]$ three polynomials satisfying conditions \eqref{condABC}.
\item [Output] an integer $k≥0$,\\
a nonzero scalar $a\in\Q$,\\
 a polynomial $D\in\Q[q]$ of degree $≤k+1$ such that $D(0)=1$,\\
three polynomials $A^*, B^*, C^*\in\Q[q]$ satisfying  \eqref{condABC}.  
\item [Step 1] [Define $k,a$]. Since $A(q)\ne0$, let $A(q)=aq^{k}+O(q^{k+1})$ with
$a\ne0$.
\item [Step 2] [Define $D$]. Define $D(q)$ by
\begin{equation}\label{eq:defD}
  \frac{aq^{k}B}{A}-\frac{aq^{k}C}{B}=D(q)+O(q^{k+2}),
\end{equation}
where $D(q)$ is a polynomial of degree at most $k+1$; then $D(0)=1$.
\item [Step 3] [Define $A^*,B^*,C^*$]. Let
\begin{align}
  A^*(q)&=\bigl(-D^2A/a+BD\,q^{k}-Ca\,q^{2k}\bigr)/q^{2k+2},\label{eq:AA}\\
  B^*(q)&=2AD/(a\,q^{k})-B,\label{eq:BB}\\
  C^*(q)&=-A\,q^{2}/a.\label{eq:CC}
\end{align}
These are polynomials in $\Q[q]$ satisfying  \eqref{condABC} by \cite[Lemma~3.2]{Han16}.
\end{description}
\end{algo} 

\begin{rem}\label{rem:algo}
In Step~2, if $C=q^2C'$ then
$aq^{k}B/A=D(q)+O(q^{k+2})$; if moreover $A=aq^{k}$, then
$B=D(q)+O(q^{k+2})$.
\end{rem} 

Let us explain how and why algorithm \nabc computes the $H$-fraction of~$F$.

\begin{itemize}[wide]
\item We initialise the data by setting
$F_1:=F$ and $(A_1,B_1,C_1):=(A,B,C)$. From \eqref{Fquad},
$F_1=-A_1/(B_1+C_1F_1)$, so by \eqref{condABC} and Step~1,
\[
  F_1(q)=-a\,q^{k}+O(q^{k+1}),
\]
where $aq^k$ is the lowest term of $A_1$. Hence $-aq^{k}/F_1=1+O(q)$, and there
are a unique polynomial $D$ of degree at most $k+1$ and a unique power series
$F_2$ such that
\begin{equation}\label{eq:onestep}
  \frac{-a\,q^{k}}{F_1}=D(q)-q^{k+2}F_2(q),\qquad\text{i.e.}\qquad
  F_1=\frac{-a\,q^{k}}{D(q)-q^{k+2}F_2(q)} .
\end{equation}
This $D$ is the polynomial of Step~2: indeed
\begin{equation*}
\frac{-a\,q^{k}}{F_1}=\frac{a\,q^{k}B_1}{A_1}+\frac{a\,q^{k}C_1F_1}{A_1}
\end{equation*}
with
$$\frac{a\,q^{k}C_1F_1}{A_1}=\frac{-a\,q^{k}C_1}{B_1+C_1F_1}=\frac{-a\,q^{k}C_1}{B_1}+O(q^{2k+2}),$$
since
$F_1=O(q^{k})$ and $C_1=O(q)$. 
\item Now, apply Step~3 of \nabc: define $A^*,B^*,C^*$ by \eqref{eq:AA}, \eqref{eq:BB}, \eqref{eq:CC}, respectively. According to \cite[Lemma~3.2]{Han16}, for these polynomial conditions \eqref{condABC} still hold and the power
series $F_2$ is still quadratic, namely:
$$A^*+B^*F_2+C^*F_2^2=0.$$
Thus the algorithm can be applied to $F_2$, and so on.
\item Writing $(A_{m+1},B_{m+1},C_{m+1})$ for the triple of $F_{m+1}$, we thus define a
sequence
\begin{equation*}%
  \bigl(A_{m+1},B_{m+1},C_{m+1};\,k_m,a_m,D_m\bigr)
  =\nabc\bigl(A_m,B_m,C_m\bigr),\qquad m≥1,
\end{equation*}
which gives the  iterated version of \eqref{eq:onestep}: $F_m=-a_mq^{k_m}/\bigl(D_m-q^{k_m+2}F_{m+1}\bigr)$. Eventually, we obtain the $H$-fraction of $F$:
\begin{equation}
\begin{split}
  F(q)&=\frac{-a_1q^{k_1}}{D_1}\cfp\frac{a_2\,q^{k_1+k_2+2}}{D_2}\cfp
  \frac{a_3\,q^{k_2+k_3+2}}{D_3}\cfp\cfd\\
  &=\cH\bigl(q;(\sigma_j),(v_j),(u_j)\bigr)
\end{split}\label{HfracABC}
\end{equation}
where we use notation \eqref{eq:Hfracstd} with $v_j=-a_{j+1}$ and $\sigma_j=k_{j+1}=\val A_{j+1}$ for $j≥0$, and $1+u_j(q)q=D_j$ for $j≥1$. 
\end{itemize}

To shorten the notation we will record $k_m$ and $a_m$ simultaneously in the leading
monomial 
$$K_m:=a_mq^{k_m}$$
 of $A_m$, so that Step~1 reads
$A_m=K_m+O(q^{k_m+1})$. In other words, the $H$-fraction \eqref{HfracABC} reads also
\begin{equation}\label{HfracABC2}
  F(q)=\fr{-K_1(q)}{D_1(q)}\cfp\fr{K_2(q)\,q^{k_1+2}}{D_2(q)}\cfp
\fr{K_3(q)\,q^{k_2+2}}{D_3(q)}\cfp\cfd
\end{equation}

\begin{defi}\label{defi:quint}
The sequence of polynomial quintuples
\[
  x_m:=\bigl(A_m,B_m,C_m;K_m,D_m\bigr),\qquad m≥1,
\]
produced by Algorithm \nabc from the quadratic equation \eqref{Fquad} and conditions \eqref{condABC},
is called the \emph{sequence of quintuples associated with~$F$}. By
\eqref{HfracABC} it determines the $H$-fraction of~$F$, and therefore, by
Theorem~\ref{th:super2}(2), all its Hankel determinants.
Note that all the entries of $x_m$ carry the same index $m$, whereas
Algorithm \nabc applied to $(A_m,B_m,C_m)$ returns
\[
  (A_{m+1},B_{m+1},C_{m+1};k_m,a_m,D_m).
\]
\end{defi}

Note that $x_{m+1}$ depends only on $(A_m,B_m,C_m)$. Consequently, if
$(A_{p'},B_{p'},C_{p'})=(A_p,B_p,C_p)$ for some indices $p<p'$, then the
sequence of quintuples is periodic from index $p$ on, with period $p'-p$, and
the $H$-fraction of $F$ is eventually periodic. This is precisely what is going to happen 
for $F=\Phi_n^{(n+2)}$, with $p=2$ and $p'=p_n+2$, where $p_n$ is given in \cref{mainthmA}; see \cref{sec:hfrac}.

\subsection{The $q$-metallic number and its shifted series}

Let us recall some facts about the $q$-metallic power series $\Phi_n$ (see \cite{OP25,HP,Pedon}). Almost by definition \eqref{defPhin} and \eqref{Phin-SE}, the $q$-metallic power series $\Phi_n$ is
characterised by the quadratic functional equation
\begin{equation}\label{eq:FE-Phin}
  q\,\Phi_n(q)^2+\bigl((1+q^n)(1-q)-q[n]_q\bigr)\Phi_n(q)=1 .
\end{equation}
from which follows at once an explicit formula:
\begin{equation*}
	\Phi_n(q)=\frac{1}{2q}\left(q[n]_q+(q^n+1)(q-1)+\sqrt{\bigl(q[n]_q+(q^n+1)(q-1)\bigr)^2+4q}\right).
\end{equation*}
Although we will not use it, we mention that the  Taylor series expansion about zero of $\Phi_n$ is of the form:
\begin{equation*}\label{SE-Phin}
\Phi_n(q)=1+q+\cdots+q^{n-1}+q^{2n}+\sum_{i=2n+1}^{+\infty}\kappa_i(\phi_n) q^i,
\end{equation*}
where coefficients $\kappa_i(\phi_n)$ are integers; see Corollary~2.6 in \cite{OP25}. 

Recall that we are interested in the $\ell$-shifted Hankel determinants of
$\Phi_n$ (for $\ell≥0$), or equivalently in the ordinary Hankel determinants of the shifts $\Phi_n^{(\ell)}$ (definition \eqref{defshift}):
$$\Delta_j^{(\ell)}:=\Delta^{(\ell)}_j(\Phi_n)=\Delta_j\bigl(\Phi_n^{(\ell)}\bigr).$$
Specifically, our target is the shift $\ell=n+2$, i.e.\ the series
$\Phi_n^{(n+2)}$.

The starting point of our whole method is that each shift is \emph{again} a
quadratic power series, obtained from the previous one by an elementary
substitution. More precisely, one has the following statement, easy to check and already proved in \cite[Lemma~7.1]{HP}.

\begin{lem}\label{lem:cut}
Let $F(q)$ be a power series with constant term $f_0$, satisfying a quadratic equation
$$A+BF+CF^2=0$$ 
with $A,B,C$ polynomials,
$C\ne0$. Then the shift $F^{(1)}(q)=\dfrac{F(q)-f_0}{q}$ satisfies the quadratic equation
$$A'+B'F^{(1)}+C'\bigl(F^{(1)}\bigr)^2=0,$$
where
\begin{equation}\label{eq:cut}
  A'=\frac1q\bigl(A+f_0B+f_0^2C\bigr),\qquad
  B'=B+2f_0C,\qquad
  C'=qC .
\end{equation}
\end{lem}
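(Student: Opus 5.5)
The plan is a direct algebraic verification. Write $F=\sum_{i\ge0}f_iq^i$ and set $G:=F^{(1)}$, so that by \eqref{defshift}
\[
F(q)=f_0+q\,G(q).
\]
Substitute this into the relation $A+BF+CF^2=0$ and expand in powers of $G$:
\[
0=A+B(f_0+qG)+C(f_0+qG)^2=\bigl(A+f_0B+f_0^2C\bigr)+q\bigl(B+2f_0C\bigr)G+q^2C\,G^2 .
\]

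Next we divide by $q$. This yields exactly $A'+B'G+C'G^2=0$ with $A',B',C'$ as in \eqref{eq:cut}. Division by $q$ is legitimate in $\Q[[q]]$, since $\Q[[q]]$ is an integral domain and $q$ is not a zero divisor there.

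It remains to check that the new coefficients are polynomials, as the lemma requires.
\begin{itemize}
\item $B'=B+2f_0C$ and $C'=qC$ are clearly polynomials.
\item For $A'$, we need $q$ to divide $A+f_0B+f_0^2C$. Evaluating the original equation at $q=0$ gives $A(0)+B(0)f_0+C(0)f_0^2=0$. So the constant term of $A+f_0B+f_0^2C$ vanishes, and $A'$ is a polynomial.
\end{itemize}
This is the only point requiring care, and it is immediate. Finally, $C'=qC\neq0$ because $C\ne0$, so the shifted equation is again a genuine quadratic one.

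Two remarks place the lemma in the paper's overall argument. First, iterating the lemma $n+2$ times starting from \eqref{eq:FE-Phin} produces the quadratic equation for $\Phi_n^{(n+2)}$ used later in Proposition~\ref{prop:init}. Second, the conditions \eqref{condABC} needed by \nabc are not part of the claim; they are checked separately at that stage.
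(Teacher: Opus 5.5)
Your proof is correct and is exactly the routine check the paper has in mind; the paper itself only calls the lemma easy to check and refers to \cite[Lemma~7.1]{HP}. That check consists of substituting $F=f_0+qF^{(1)}$, collecting powers of $F^{(1)}$, and using $A(0)+B(0)f_0+C(0)f_0^2=0$ to see that $A'$ is a polynomial. One cosmetic point: the division by $q$ reads more cleanly if you first note that $q$ divides $A+f_0B+f_0^2C$ (or work in $\Q((q))$), and you do establish that divisibility anyway.
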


From now on we put:
$$
  Q:=q^2-q+1.
$$
Iterating Lemma~\ref{lem:cut} from \eqref{eq:FE-Phin} produces the quadratic
equation of every shift $\Phi_n^{(\ell)}$. Indeed, rewriting 
$$A^{(0)}+B^{(0)}\Phi_n+C^{(0)}\Phi_n^2=0$$
for \eqref{eq:FE-Phin} in the
normalisation 
$$A^{(0)}=-1,\quad B^{(0)}=(1+q^n)(1-q)-q[n]_q,\quad C^{(0)}=q,$$ 
one
proves by induction (see \cite[Prop.~7.2]{HP}) that
\begin{equation*}
A^{(\ell)}+B^{(\ell)}\Phi_n^{(\ell)}+C^{(\ell)}\bigl(\Phi_n^{(\ell)}\bigr)^2=0,
\end{equation*} 
where, for $0≤\ell≤ n$,
\begin{equation}\label{eq:ABCshift-gen}
  A^{(\ell)}=\frac{q^{\ell+1}-Q\,(q^{n}-q^{n-\ell}+1)}{(q-1)^2},\quad
  B^{(\ell)}=\frac{2q^{\ell+1}-Q\,(q^{n}+1)}{q-1},\quad
  C^{(\ell)}=q^{\ell+1},
\end{equation}
while at $\ell=n+1$
\begin{equation}\label{eq:ABCn+1}
  A^{(n+1)}=-q^{n-1},\qquad
  B^{(n+1)}=-\frac{Q+(q^2-3q+1)q^n}{q-1},\qquad
  C^{(n+1)}=q^{n+2}.
\end{equation}
The case we need, $\ell=n+2$, is one further step of Lemma~\ref{lem:cut}.

\begin{prop}\label{prop:init}
For every integer $n≥1$, the shifted series $F=\Phi_n^{(n+2)}$ is a
quadratic power series with integer coefficients, satisfying
\begin{equation}\label{eq:quad-n+2}
  A^{(n+2)}+B^{(n+2)}\,F+C^{(n+2)}\,F^2=0,
\end{equation}
where
\begin{equation}\label{eq:ABC0}
\begin{dcases*}
  A^{(n+2)}=Q,\quad
  B^{(n+2)}=2q^{3}-q^{2}+q+1,\quad
  C^{(n+2)}=q^{4},&if $n=1$,\\
  A^{(n+2)}=-q^{n-2},\quad
  B^{(n+2)}=-\frac{(q^2-3q+1)\,q^n+Q}{q-1},\quad
  C^{(n+2)}=q^{n+3}&if $n≥2$.
\end{dcases*}
\end{equation}
Moreover $B^{(n+2)},C^{(n+2)}\in\Z[q]$ and the triple
$(A^{(n+2)},B^{(n+2)},C^{(n+2)})$ satisfies the conditions \eqref{condABC}.
\end{prop}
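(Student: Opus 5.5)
The plan is to obtain \eqref{eq:quad-n+2} by one application of Lemma~\ref{lem:cut} to the equation of $\Phi_n^{(n+1)}$ given by \eqref{eq:ABCn+1}. In \eqref{eq:cut}, $f_0$ is the constant term of $\Phi_n^{(n+1)}$, i.e.\ the coefficient $\kappa_{n+1}(\phi_n)$.

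\textbf{Step 1: integrality of $B^{(n+1)}$.} Its numerator is $Q+(q^2-3q+1)q^n$. At $q=1$ this equals $1+(-1)=0$, so it is divisible by the monic polynomial $q-1$. Hence $B^{(n+1)}\in\Z[q]$. At $q=0$ the numerator equals $Q(0)=1$, so $B^{(n+1)}(0)=-1/(-1)=1$.

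\textbf{Step 2: the constant term $f_0$.} Rather than relying on the Taylor expansion, I read $f_0$ off the functional equation itself. From $A^{(n+1)}+B^{(n+1)}F+C^{(n+1)}F^2=0$ with $C^{(n+1)}(0)=0$ and $B^{(n+1)}(0)=1$, we get
\[
f_0=-A^{(n+1)}(0).
\]
\begin{itemize}
\item For $n\ge2$ we have $A^{(n+1)}=-q^{n-1}$, which vanishes at $0$, so $f_0=0$.
\item For $n=1$ we have $A^{(2)}=-1$, so $f_0=1$.
\end{itemize}
This agrees with the expansion $1+q+\dots+q^{n-1}+q^{2n}+\cdots$.

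\textbf{Step 3: apply \eqref{eq:cut}.}
\begin{itemize}
\item For $n\ge2$, $f_0=0$ gives $A'=A^{(n+1)}/q=-q^{n-2}$, $B'=B^{(n+1)}$ and $C'=q^{n+3}$. This is exactly \eqref{eq:ABC0}.
\item For $n=1$, the numerator of $B^{(2)}$ is $q^3-2q^2+1=(q-1)(q^2-q-1)$, so $B^{(2)}=1+q-q^2$. With $f_0=1$ we get
\[
A'=\bigl(-1+(1+q-q^2)+q^3\bigr)/q=q^2-q+1=Q,
\]
\[
B'=1+q-q^2+2q^3,\qquad C'=q^4,
\]
as claimed.
\end{itemize}

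\textbf{Step 4: conditions \eqref{condABC} and integrality of the series.}
\begin{itemize}
\item $A^{(n+2)}\neq0$ is clear in both cases.
\item $C^{(n+2)}$ is a positive power of $q$, so $C^{(n+2)}\ne0$ and $C^{(n+2)}(0)=0$.
\item $B^{(n+2)}(0)=1$ follows from Step~1 when $n\ge2$, and by inspection when $n=1$.
\item $F$ has integer coefficients because $\Phi_n\in\Z[[q]]$ and shifting preserves this. Alternatively, one solves $F=-A/(B+CF)$ recursively with $B(0)=1$.
\end{itemize}

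There is no real obstacle here. The only points needing care are the correct value of $f_0$, which distinguishes $n=1$ from $n\ge2$, and the polynomial division by $q-1$.
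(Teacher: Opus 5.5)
Your proposal is correct and follows essentially the same route as the paper. You apply Lemma~\ref{lem:cut} once to \eqref{eq:ABCn+1}, read off $f_0$ by evaluating the equation at $q=0$ ($f_0=0$ for $n\ge2$, $f_0=1$ for $n=1$), and obtain integrality of $B^{(n+2)}$ from the vanishing of its numerator at $q=1$; your factorisation $q^3-2q^2+1=(q-1)(q^2-q-1)$ and the remark about dividing by the monic $q-1$ simply make explicit what the paper leaves to the reader.
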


\begin{proof}
We start with case $n≥2$ and apply Lemma~\ref{lem:cut} to  coefficients \eqref{eq:ABCn+1} of the series $F=\Phi_n^{(n+1)}$. The constant term
$f_0=\Phi_n^{(n+1)}(0)$ is determined by evaluating
$A^{(n+1)}+B^{(n+1)}f_0+C^{(n+1)}f_0^2=0$ at $q=0$: since $n≥2$ we have
$A^{(n+1)}(0)=0$, $C^{(n+1)}(0)=0$ and $B^{(n+1)}(0)=1$, whence $f_0=0$.
Thus \eqref{eq:cut} degenerates to
$$
  A^{(n+2)}=\frac1q\,A^{(n+1)}=-q^{n-2},\quad
  B^{(n+2)}=B^{(n+1)},\quad
  C^{(n+2)}=q\,C^{(n+1)}=q^{n+3},
$$
which is the second line of \eqref{eq:ABC0}. The numerator of $B^{(n+2)}$ vanishes at $q=1$, so $B^{(n+2)}\in\Z[q]$.
\\
For $n=1$ the same computation runs one step longer. Formula
\eqref{eq:ABCn+1} still holds, giving 
$$A^{(2)}=-1,\quad B^{(2)}=1+q-q^{2},\quad C^{(2)}=q^{3},$$
 but now the constant term of $\Phi_1^{(2)}$ is $f_0=1$, and
Lemma~\ref{lem:cut} yields  the first line of \eqref{eq:ABC0}. 
\\
In both cases one checks easily 
$A^{(n+2)}\ne0$, $C^{(n+2)}\ne0$, $B^{(n+2)}(0)=1$ and $C^{(n+2)}(0)=0$.
\end{proof}

\section{The Hankel continued fraction of $\Phi_n^{(n+2)}$: proof of Theorem~\ref{mainthmA}}
\label{sec:hfrac}

This section is devoted to the proof of \cref{mainthmA}. Following  the method explained  in \cref{sec:HF-NABC}, we will show that the sequence of quintuples :
\begin{equation}\label{quint}
  x_m=(A_m,B_m,C_m;K_m,D_m),
  \qquad m=1,2,3,\dots,
\end{equation}
associated with $F:=\Phi_n^{(n+2)}$ (in the sense of \cref{defi:quint}) and starting from the triple
\[
  (A_1,B_1,C_1):=\bigl(A^{(n+2)},B^{(n+2)},C^{(n+2)}\bigr)
\]
of Proposition~\ref{prop:init},
is such that 
\begin{equation}\label{quintper}
x_{j+k p_n}=x_{j}\quad\text{for all $j≥2$ and $k≥0$},
\end{equation}
 where $p_n$ is the period defined by
\begin{equation}\label{defpn}
p_n:=
\begin{dcases*}
2&if $n=1$,\\
9&if $n=2$,\\
9n-13&if $n≥3$ is odd,\\
9n-10&if $n≥4$ is even.
\end{dcases*}
\end{equation}
This will imply all assertions of \cref{mainthmA}: the first quintuple $x_1$ determines the pre-periodic block $\cI_n(q)$ in the $H$-fraction of $\Phi_n^{(n+2)}$, while the set of quintuples $x_m$ for $2≤m≤p_n+1$ determines the periodic block $\cP_n(q)$.

This section is organised as follows: \S\ref{ssec:onestep} gives a criterion, valid for any quadratic series, for checking one
step of Algorithm~\nabc; \S\ref{ssec:small} treats $n≤3$;
\S\ref{ssec:tables} describes the tables of the appendices and defines from
them a sequence $(y_m)$ for $n≥4$; \S\ref{ssec:verif} proves that
$(y_m)=(x_m)$; and \S\ref{ssec:proofA} concludes.

\subsection{Verification of one step of Algorithm~\nabc}\label{ssec:onestep}

For a triple $(A,B,C)$ and a quintuple $x=(A,B,C;K,D)$ with $K=a\,q^{k}$,
$a\in\Q^{*}$, $k\in\Z$, $D\in\Q[q]$, we shall use the conditions
\begin{align}
  &A,B,C\in\Q[q],\quad B(0)=1,\quad C(0)=0,\label{eq:adm}\\
  &k≥0,\quad D(0)=1,\quad\deg D≤ k+1.\label{eq:shape}
\end{align}
Condition \eqref{eq:adm} is the input condition \eqref{condABC} of
Algorithm~\nabc apart from $A,C\ne0$, and every output triple $(A^*,B^*,C^*)$ of
Algorithm~\nabc satisfies \eqref{eq:adm}. 

For two quintuples
$x=(A,B,C;K,D)$, $K=aq^k$, and $x'=(A',B',C';K',D')$ we write
\[
  x\to x'\qquad\text{if}\qquad \nabc(A,B,C)=(A',B',C';\,k,a,D),
\]
that is, if $(k,a,D)$ is the output of Steps~1 and~2 of Algorithm~\nabc applied
to $(A,B,C)$, and $(A',B',C')$ is the triple then computed by Step~3. By
construction, the sequence of quintuples associated with $F$ is the unique
sequence $(x_m)$ such that $x_1$ has the triple $(A_1,B_1,C_1)$ and
$x_m\to x_{m+1}$ for all $m≥1$.

Step~3 \eqref{eq:AA}--\eqref{eq:CC} makes sense for any quintuple, whereas
Steps~1 and~2 are truncations, which depend on the sizes of the exponents. The
following criterion replaces them by a polynomiality condition.

\begin{lem}\label{lem:polycrit}
Let $x=(A,B,C;K,D)$, $K=aq^k$, satisfy \eqref{eq:adm} and \eqref{eq:shape},
with $C\ne0$, and define $A^*,B^*,C^*$ by \eqref{eq:AA}--\eqref{eq:CC}. Then
$$\nabc(A,B,C)=(A^*,B^*,C^*;\,k,a,D)$$ if and only if $A^*,B^*\in\Q[q]$ and
$B^*(0)=1$.
\end{lem}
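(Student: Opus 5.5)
The plan is to prove the two implications separately. The forward one is essentially the correctness of Algorithm~\nabc. The backward one reads Steps~1 and~2 off from the divisibility conditions encoded in $A^*,B^*\in\Q[q]$.

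\emph{($\Rightarrow$)} Assume $\nabc(A,B,C)=(A^*,B^*,C^*;k,a,D)$. Then the input satisfies \eqref{condABC}. By \cite[Lemma~3.2]{Han16}, the output triple consists of polynomials satisfying \eqref{condABC}, so in particular $B^*(0)=1$. Nothing more is needed in this direction.

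\emph{($\Leftarrow$)} Assume $A^*,B^*\in\Q[q]$ and $B^*(0)=1$.

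First, $A\neq 0$. Otherwise \eqref{eq:BB} gives $B^*=-B$, hence $B^*(0)=-1$, a contradiction. So the input satisfies \eqref{condABC}, using \eqref{eq:adm} and $C\ne0$.

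Next, Step~1. By \eqref{eq:BB}, $2AD/(aq^k)=B^*+B$ is a polynomial with constant term $2$. Since $D(0)=1$, $D$ is invertible in $\Q[[q]]$. It follows that $A/(aq^k)$ is a power series with constant term $1$. Hence we may write $A=aq^kU$ with $U\in\Q[[q]]$ and $U(0)=1$, i.e.\ $A=aq^k+O(q^{k+1})$. Thus Step~1 returns exactly the pair $(k,a)$ of $K$.

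Then Step~2, which I expect to be the only point requiring care, because the defining congruence \eqref{eq:defD} involves $C/B$ rather than anything visible in $A^*$. Polynomiality of $A^*$ in \eqref{eq:AA} means
\[
D^2A/a-BDq^k+Caq^{2k}=O(q^{2k+2}).
\]
Substituting $A=aq^kU$ and dividing by $q^k$ gives
\[
D^2U-BD+aq^kC=O(q^{k+2}).
\]
Rearranging gives $D(B-DU)\equiv aq^kC$, and dividing by the unit $DU$ yields
\[
D\equiv \frac{B}{U}-\frac{aq^kC}{DU}\pmod{q^{k+2}}.
\]
On the other hand, the left side of \eqref{eq:defD} equals $B/U-aq^kC/B$. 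So it suffices to show
\[
aq^kC\Bigl(\frac1{DU}-\frac1B\Bigr)=O(q^{k+2}).
\]
This holds because $C=O(q)$, and $DU\equiv B\equiv1\pmod q$, so the bracket is $O(q)$. Together with $\deg D\le k+1$ from \eqref{eq:shape}, uniqueness of the truncation shows that $D$ is precisely the polynomial of Step~2.

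Finally, Step~3 is given by the very formulas \eqref{eq:AA}--\eqref{eq:CC} defining $A^*,B^*,C^*$. Hence $\nabc(A,B,C)=(A^*,B^*,C^*;k,a,D)$, as claimed.
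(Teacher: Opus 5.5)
Your proof is correct and follows the same outline as the paper's: the forward direction comes from \cite[Lemma~3.2]{Han16}, $A\ne0$ and Step~1 are read off from \eqref{eq:BB}, and Step~2 is extracted from the polynomiality of $A^*$. The only variation is at the end of Step~2: the paper shows that both $D$ and the Step-2 polynomial $\widehat D$ are roots of the same quadratic congruence modulo $q^{k+2}$, then concludes via the factorisation $(D-\widehat D)(D+\widehat D-aq^kB/A)$ with an invertible second factor, whereas you isolate $D\equiv B/U-aq^kC/(DU)$ and compare it directly with the left side of \eqref{eq:defD}, which avoids the two-roots argument but rests on the same estimate that $aq^kC$ times an $O(q)$ bracket is $O(q^{k+2})$.
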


\begin{proof}
The direct implication was noted above. Conversely, assume that $A^*,B^*\in\Q[q]$
and $B^*(0)=1$.

\emph{Step~1.} By \eqref{eq:BB}, $2AD=aq^{k}(B+B^*)$, where $B+B^*$ is a
polynomial with constant term $1+1=2$. Since $D(0)=1$, it follows that
$A=aq^{k}+O(q^{k+1})$. So Step~1 of Algorithm~\nabc, applied to $A$, gives
exactly $k$ and $a$. (In particular $A\ne0$; indeed, if $A=0$ then $B^*=-B$ and
$B^*(0)=-1$.)

\emph{Step~2.} Multiplying \eqref{eq:AA} by $-a/A$ gives
\[
  D^2-\frac{aq^{k}B}{A}\,D+\frac{a^2q^{2k}C}{A}
  =-\frac{aq^{k}}{A}\,q^{k+2}A^* .
\]
Since $aq^{k}/A=1+O(q)$ is a power series and $A^*$ is a polynomial, the
right-hand side is $O(q^{k+2})$; so $D$ is a solution of the congruence
\begin{equation}\label{eq:congD}
  X^2-\frac{aq^{k}B}{A}\,X+\frac{a^2q^{2k}C}{A}\equiv0\pmod{q^{k+2}} .
\end{equation}
Let $\widehat D$ be the polynomial defined by Step~2, i.e.\
$\widehat D\equiv aq^{k}B/A-aq^{k}C/B\pmod{q^{k+2}}$. Since
$aq^{k}B/A\equiv\widehat D+aq^{k}C/B$, the left-hand side of \eqref{eq:congD}
at $X=\widehat D$ is congruent modulo $q^{k+2}$ to
\[
  \widehat D^2-\Bigl(\widehat D+\frac{aq^{k}C}{B}\Bigr)\widehat D
  +\frac{a^2q^{2k}C}{A}
  =aq^{k}C\Bigl(\frac{aq^{k}}{A}-\frac{\widehat D}{B}\Bigr).
\]
Here $aq^kC=O(q^{k+1})$ because $C(0)=0$, and the bracket is $O(q)$ because
$aq^k/A$, $\widehat D$ and $B$ all have constant term $1$. Hence $\widehat D$ is
also a solution of \eqref{eq:congD}. Subtracting the two congruences gives
\[
  (D-\widehat D)\Bigl(D+\widehat D-\frac{aq^{k}B}{A}\Bigr)\equiv0\pmod{q^{k+2}},
\]
and the second factor has constant term $1+1-1=1$, so it is invertible in
$\Q[[q]]$. Therefore $D\equiv\widehat D\pmod{q^{k+2}}$, and $D=\widehat D$
because both have degree at most $k+1$. Thus $(k,a,D)$ is the output of Steps~1
and~2, and Step~3 then produces $(A^*,B^*,C^*)$.
\end{proof}

Since $C^*=-Aq^2/a$ is a polynomial with $C^*(0)=0$ as soon as $A$ is a
polynomial, the lemma gives the following practical test.

\begin{cor}\label{cor:step}
Let $x=(A,B,C;K,D)$ and $x'=(A',B',C';K',D')$ be two quintuples, with $C\ne0$.
Assume that
\begin{enumerate}[leftmargin=1.5cm]
\item[\textup{(S3)}] the triple computed from $x$ by Step~3 is $(A',B',C')$;
\item[\textup{(S12)}] $(A,B,C)$ and $(A',B',C')$ satisfy \eqref{eq:adm}, and
$x$ satisfies \eqref{eq:shape}.
\end{enumerate}
Then $x\to x'$.
\end{cor}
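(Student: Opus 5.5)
The corollary follows directly from Lemma~\ref{lem:polycrit}. I would check that its hypotheses hold and that its polynomiality conclusion is exactly what (S3) and (S12) provide.

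First, hypothesis (S12) says that $(A,B,C)$ satisfies \eqref{eq:adm}, that $x$ satisfies \eqref{eq:shape}, and we are given $C\ne0$. So $x$ meets all the standing assumptions of Lemma~\ref{lem:polycrit}. Let $A^*,B^*,C^*$ be the polynomials (a priori rational functions) defined from $x$ by \eqref{eq:AA}--\eqref{eq:CC}. By (S3), $(A^*,B^*,C^*)=(A',B',C')$.

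Second, (S12) also says that $(A',B',C')$ satisfies \eqref{eq:adm}. Hence $A^*=A'\in\Q[q]$ and $B^*=B'\in\Q[q]$, and $B^*(0)=B'(0)=1$. These are precisely the conditions in the converse direction of Lemma~\ref{lem:polycrit}. The lemma therefore gives
\[
\nabc(A,B,C)=(A^*,B^*,C^*;\,k,a,D)=(A',B',C';\,k,a,D),
\]
where $K=aq^k$. This is the definition of $x\to x'$.

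Nothing in this argument is a real obstacle. The one point to watch is that the relation $x\to x'$ does not involve $K'$ and $D'$, so no condition on them is needed. Also, $C'(0)=0$ is part of \eqref{eq:adm} but is not used; it holds anyway because $C'=-Aq^2/a$.
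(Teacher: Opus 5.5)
Your proposal is correct and is essentially the paper's own argument: the corollary is a direct application of Lemma~\ref{lem:polycrit}, with (S3) identifying $(A^*,B^*,C^*)$ with $(A',B',C')$, and the \eqref{eq:adm} part of (S12) for $(A',B',C')$ supplying $A^*,B^*\in\Q[q]$ and $B^*(0)=1$. The paper likewise observes that the conditions on $C'$ are automatic, since $C^*=-Aq^2/a$.
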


Here, condition (S3) (meaning `Step~3') is an identity between rational functions, whereas (S12) (meaning `Steps~1 \& 2') is a
set of statements about $q$-orders. 

Conversely, if $x\to x'$ then $(A',B',C')$
satisfies \eqref{eq:adm}. Hence, if $x$ satisfies \eqref{eq:shape} and the
triple computed from $x$ by Step~3 does \emph{not} satisfy \eqref{eq:adm}, then
the $K,D$ of $x$ are not those of Algorithm~\nabc.

\subsection{The cases $n=1,2,3$}\label{ssec:small}

For a fixed numerical $n$, Algorithm~\nabc can simply be run from the initial
triple $(A_1,B_1,C_1)=\bigl(A^{(n+2)},B^{(n+2)},C^{(n+2)}\bigr)$ of Proposition~\ref{prop:init}. For $n=1,2,3$ this is
done by the program \texttt{verify\_base.sage}, and the resulting quintuples
are listed in Appendices~\ref{sec:n1}, \ref{sec:n2} and~\ref{sec:n3}. 
In each
case one observes that 
$$(A_{p_n+2},B_{p_n+2},C_{p_n+2})=(A_2,B_2,C_2)$$ with
$p_n=2,9,14$ respectively, so that the sequence is periodic from $m=2$ on, with
period $p_n$; moreover $x_{1+p_n}=(-A_1,B_1,-C_1;\,\ast,\,\ast)$.

\subsection{The pass-tables for $n≥4$}\label{ssec:tables}

From now on $n≥4$. In this subsection we only \emph{describe} the tables of
Appendices~\ref{sec:pass1}--\ref{sec:bridge3} and use them to define a
sequence of quintuples $(y_m)_{m≥1}$; that $y_m=x_m$ is proved in
\S\ref{ssec:verif}. Let us first introduce some notation and vocabulary.

\begin{itemize}
\item %
The entries of the tables are rational functions of
$q$, $q^{2j}$ and $q^{n}$. An index $m$ is written $m=9j+r$ with
$r\in\{1,\dots,9\}$: the \emph{block} $j≥0$ consists of the indices
$9j+1,\dots,9j+9$, and $r$ is the position in the block. We put
\[
  d:=n-2j .
\]
\item A \emph{pass} is a family of nine formulas, the \emph{rows} $r=1,\dots,9$,
giving the quintuple at index $9j+r$ as a function of $(n,j)$. There are two
passes:
\begin{itemize}
\item \emph{pass~$1$} (Appendix~\ref{sec:pass1}), whose rows are given
separately for $j=0$ and $r≤6$ (the \emph{special rows}) and by a
\emph{generic row} otherwise;
\item \emph{pass~$2$} (Appendix~\ref{sec:pass2}), whose rows are all generic;
its index $9j+r$ is local to the pass, and $j$ is an integer when $n$ is even, a
half-integer when $n$ is odd (in both cases $d=n-2j$ is even).
\end{itemize}
\item There are also three \emph{bridges}, which are short lists of quintuples
free of $j$: we call them $\mathrm{b}1,\dots,\mathrm{b}8$ (Appendix~\ref{sec:bridge}),
$\mathrm{c}1,\dots,\mathrm{c}5$ (Appendix~\ref{sec:bridge2}) and
$\mathrm{d}1,\mathrm{d}2,\mathrm{d}3$ (Appendix~\ref{sec:bridge3}). 
All the formulas hold for every $n≥4$, even or odd. 
\item A \emph{tuple} is the value of a
row, or a bridge entry, at a given $(n,j)$. 
\item Inside a pass, the
\emph{successor} of row $r<9$ of block $j$ is row $r+1$ of block $j$, and the
successor of row $9$ of block $j$ is row $1$ of block $j+1$.
\end{itemize}

Now, let us describe the sequences $(y_m)$ in the pass-tables of the Appendices.

\begin{itemize}
\item\emph{The sequence $(y_m)$ for even $n≥4$.} Put $t_n:=\tfrac92n-7$ and recall that
$p_n=9n-10$.
\begin{enumerate}
\item[(a)] For $1≤ m≤ t_n-1$, $y_m$ is row $r$ of pass~$1$ at block $j$,
where $m=9j+r$. Thus pass~$1$ runs through the full blocks $j=0,\dots,\frac
n2-2$ (that is, $d=n,n-2,\dots,4$), and through row~$1$ of the block
$j=\frac n2-1$ ($d=2$).
\item[(b)] $y_{t_n},\dots,y_{t_n+4}$ are $\mathrm{c}1,\dots,\mathrm{c}5$, respectively.
\item[(c)] The next $\tfrac92n-14$ terms are the rows of pass~$2$ from local
index $6$ (row~$6$ at $j=0$) to local index $\tfrac92n-9$ (row~$9$ at
$j=\frac n2-2$, i.e.\ $d=4$), in increasing order.
\item[(d)] The next eight terms are $\mathrm{b}1,\dots,\mathrm{b}8$; thus
$y_{1+p_n}=\mathrm{b}8$.
\item[(e)] $y_{m+p_n}=y_m$ for all $m≥2$.
\end{enumerate}

\item \emph{The sequence $(y_m)$ for odd $n≥5$.} Put $t_n:=\frac{9n-15}2$ and recall that
$p_n=9n-13$.
\begin{enumerate}
\item[(a)] For $1≤ m≤ t_n-1$, $y_m$ is row $r$ of pass~$1$ at block $j$,
where $m=9j+r$. Thus pass~$1$ runs through the full blocks
$j=0,\dots,\frac{n-5}2$ ($d=n,n-2,\dots,5$), and through rows~$1$--$5$ of the
block $j=\frac{n-3}2$ ($d=3$).
\item[(b)] $y_{t_n},y_{t_n+1},y_{t_n+2}$ are $\mathrm{d}1,\mathrm{d}2,\mathrm{d}3$, respectively.
\item[(c)] The next $\frac{9n-29}2$ terms are the rows of pass~$2$, read at
half-integer $j$, from local index $\tfrac{13}2$ (row~$2$ at $j=\tfrac12$) to
local index $\tfrac92n-9$ (row~$9$ at $j=\frac n2-2$, i.e.\ $d=4$), in
increasing order.
\item[(d)] The next eight terms are $\mathrm{b}1,\dots,\mathrm{b}8$; thus
$y_{1+p_n}=\mathrm{b}8$.
\item[(e)] $y_{m+p_n}=y_m$ for all $m≥2$.
\end{enumerate}
\end{itemize}
\medskip

In both cases the counts add up: one period ($m=2,\dots,1+p_n$) consists of
$t_n-2$ tuples of pass~$1$, the tuples of the bridge $\mathrm{c}$ (resp.\
$\mathrm{d}$), the tuples of pass~$2$ and the eight tuples of the bridge
$\mathrm{b}$, i.e.\ 
$$\bigl(\tfrac92n-9\bigr)+5+\bigl(\tfrac92n-14\bigr)+8=9n-10=p_n$$
 for even $n$ and
$$\tfrac{9n-19}2+3+\tfrac{9n-29}2+8=9n-13=p_n$$
 for odd $n$. For a later use, we observe also that
\begin{equation}\label{eq:b8}
  y_1=\bigl(-q^{n-2},\,B_1,\,q^{n+3};\,-q^{n-2},\,D\bigr),\qquad
  \mathrm{b}8=\bigl(q^{n-2},\,B_1,\,-q^{n+3};\,q^{n-2},\,D\bigr),
\end{equation}
with $D=-\frac{Q-q^n}{q-1}$. %
Hence $y_1$ has the triple
$(A_1,B_1,C_1)$ of Proposition~\ref{prop:init}, and $\mathrm{b}8$ has the
triple $(-A_1,B_1,-C_1)$.

\subsection{Verification of the tables}\label{ssec:verif}

\begin{prop}\label{prop:tables}
Let $n≥4$. For every $m≥1$, $y_m\to y_{m+1}$. Hence $y_m=x_m$ for all
$m≥1$.
\end{prop}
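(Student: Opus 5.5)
The plan rests on Corollary~\ref{cor:step}. For each consecutive pair $(y_m,y_{m+1})$ I will check condition (S3), which is an identity of rational functions, and condition (S12), which consists of statements about $q$-orders and degrees. This gives $y_m\to y_{m+1}$. The identity $y_m=x_m$ then follows by induction on $m$. The base case is \eqref{eq:b8}: $y_1$ carries the triple $(A_1,B_1,C_1)$ of Proposition~\ref{prop:init}. For the inductive step, recall that $x_{m+1}$ is the unique quintuple with $x_m\to x_{m+1}$, and that its triple depends only on $(A_m,B_m,C_m)$.

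By periodicity (e) it suffices to treat $1\le m\le p_n+1$, with the single wrap-around transition $y_{1+p_n}=\mathrm b8\to y_{2+p_n}=y_2$. I will handle that wrap-around using \eqref{eq:b8}. The map of Step~3 is equivariant under $(A,B,C;K,D)\mapsto(-A,B,-C;-K,D)$: in \eqref{eq:AA}--\eqref{eq:CC} the output $A^*,B^*,C^*$ is unchanged when $A,C,a$ all change sign. So $\mathrm b8\to y_2$ follows from $y_1\to y_2$.

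The remaining transitions fall into finitely many types:
\begin{itemize}
\item generic row $r\to r+1$ inside pass~1 or pass~2, for $r=1,\dots,8$;
\item row $9$ of block $j$ to row $1$ of block $j+1$;
\item the special rows of pass~1 at $j=0$;
\item the junctions pass~1 $\to$ c1 (resp.\ d1);
\item transitions inside each bridge;
\item c5 $\to$ pass~2 row~6 at $j=0$ (resp.\ d3 $\to$ row~2 at $j=\tfrac12$);
\item pass~2 row~9 at $d=4$ $\to$ b1.
\end{itemize}
In each case both tuples are explicit rational functions of $q$, $X=q^{2j}$ and $Y=q^n$. I will treat $X$ and $Y$ as independent indeterminates, so that (S3) becomes an identity in $\Q(q,X,Y)$. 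This is a finite symbolic verification, to be done by computer algebra, and it is valid uniformly in $n$ and $j$. Junction transitions at a fixed $j$ (e.g.\ $d=2,3,4$) are obtained by substituting $X=Yq^{-d}$, which leaves identities in $q,Y$ alone.

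The main obstacle is (S12). Here I must check that each $A,B,C$ is a genuine polynomial, that $B(0)=1$ and $C(0)=0$, and that $k\ge0$, $D(0)=1$ and $\deg D\le k+1$. All of these depend on the actual sizes of the exponents $2j$, $n$ and $d=n-2j$, not merely on the formal variables. My approach is to write each table entry as a quotient with denominator a power of $(q-1)$ times a monomial. Polynomiality then reduces to two conditions: the numerator is divisible by $(q-1)^e$, which can be checked by differentiating at $q=1$ symbolically in $n,j$, and all monomial exponents are nonnegative. The latter amounts to linear inequalities in $j,n,d$ over the allowed range: $d\ge4$ in generic blocks, with the exceptional boundary blocks handled separately. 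This is precisely why the special rows and bridges exist. Constant terms and lowest-order terms are then read off once the exponents are known to be ordered, e.g.\ $2j+1< n$. The degree bound on $D$ is likewise an inequality among exponents. I will organise these inequality checks per row, record the valid range of $d$ for each, and confirm that the ranges used in (a)--(d) lie inside them. The boundary cases (small $d$, $j=0$, half-integer $j$ for odd $n$) are where errors would hide, so I will verify them as separate specialisations.
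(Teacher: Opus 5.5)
Your proposal is correct and follows essentially the same route as the paper. You apply Corollary~\ref{cor:step} to each consecutive pair over one period, and prove (S3) as rational-function identities in $q$, $q^{2j}$, $q^{n}$. You prove (S12) through $(q-1)^e$-divisibility, checked symbolically in $n,j$, together with affine inequalities on the $q$-orders over the range where each row is used. A coverage check and induction from $y_1$ then finish the argument.

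The only deviation is the wrap-around step $\mathrm{b}8\to y_2$. You deduce it from $y_1\to y_2$ via the sign symmetry $(A,B,C;K,D)\mapsto(-A,B,-C;-K,D)$. Step~3 is invariant under this symmetry, and the (S12) conditions are unaffected by it, so this is a valid shortcut. The paper instead certifies that step directly by program.
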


\begin{proof}
By condition (e) above (for both parities), it suffices to treat $1≤ m≤ 1+p_n$. We apply
Corollary~\ref{cor:step} to each pair $(y_m,y_{m+1})$, and check its two
conditions.

\emph{Condition \textup{(S3)}.} The pairs $(y_m,y_{m+1})$ are of finitely many
kinds: a row of a pass and its successor (generic rows, special rows, or the
step from the last special row to the generic rows), and the steps involving
a bridge, namely
\begin{align*}
  &\text{pass 1, row 1 at } d=2\to\mathrm{c}1,\qquad
  \mathrm{c}5\to\text{pass 2, row 6 at } j=0
  \qquad(n\text{ even}),\\
  &\text{pass 1, row 5 at } d=3\to\mathrm{d}1,\qquad
  \mathrm{d}3\to\text{pass 2, row 2 at } j=\tfrac12
  \qquad(n\text{ odd}),\\
  &\text{pass 2, row 9 at } d=4\to\mathrm{b}1,\qquad
  \mathrm{b}i\to\mathrm{b}(i{+}1),\qquad
  \mathrm{b}8\to y_2 .
\end{align*}
\begin{itemize}
\item For a step from a row to its successor, (S3) is an identity between rational
functions of $q$, $u:=q^{2j}$ and $w:=q^n$; since it does not depend on the
relative size of the exponents, it is an identity in $\Q(q,u,w)$, valid for
all $n$ and $j$ at once (note $q^{4j}=u^2$ and $q^{n-2j}=w/u$).
\item For a step into
a bridge, (S3) says that the triple computed by Step~3 from the last row of
the pass is that of the bridge entry. By the identity of the first case,
Step~3 maps this row to the triple of the next row, so (S3) amounts to the
identity saying that the bridge entry has the triple of the next row of the
pass at the last block, e.g.\ that $\mathrm{c}1$ has the
triple of row~$2$ of pass~$1$ at $q^{2j}=q^{n-2}$.
\item   For a step out of a bridge
($\mathrm{c}5$, $\mathrm{d}3$, $\mathrm{b}i$), (S3) is a direct identity, the
target row being taken at a fixed $j$.
\end{itemize}
Thus (S3) is an identity in $\Q(q,u,w)$ in the first case and in
$\Q(q,w)$ in the other two; all these identities are proved by the program
\texttt{certify\_step3.sage} by reading the tables from
\texttt{passtables.sage}.

\emph{Condition \textup{(S12)}.} This is a set of statements about
$q$-orders. Write each entry $X$ of a tuple as a finite sum
\[
  X=\sum_{b,c}P_{b,c}(q)\,u^{b}w^{c},\qquad u=q^{2j},\ w=q^{n},\
  P_{b,c}\in\Q(q).
\]
In the tables, $(b,c)$ ranges over $(0,0),(1,0),(-1,1),(0,1)$ --- the terms
$1,q^{2j},q^{n-2j},q^{n}$ --- and the denominator of $P_{b,c}$ has the form
$\lambda\,q^{s}(q-1)^{e}$ with $\lambda\in\Q^{*}$ and $s,e$ two nonnegative integers. The term $(b,c)$ has
$q$-order
\begin{equation}\label{eq:blockord}
  \ord(b,c)=\val_q P_{b,c}+2jb+nc,
\end{equation}
an affine function of $(j,n)$. An entry $X$ is a polynomial in $q$ as soon as
\textup{(a)} the factor $(q-1)^{e}$ of its denominator divides the numerator
after the substitution $u=q^{2j}$, $w=q^{n}$, and \textup{(b)} every term has
$\ord(b,c)≥0$. Condition (a) is checked identically in $j$ and $n$, by
expanding the numerator at $q=1+\varepsilon$ with $j,n$ symbolic (the
coefficients of $\varepsilon^0,\dots,\varepsilon^{e-1}$ are polynomials in
$j,n$, and they vanish). Condition (b), and likewise $B(0)=1$ ($B$ starts with the term $1$
and every other term has order $≥1$), $C(0)=0$,
$k≥0$, $D(0)=1$ and $\deg D≤ k+1$, are inequalities between affine
functions of $(j,n)$, to be proved on a set of the form
$\mathcal{C}=\{\,j≥ j_0,\ n-2j≥ c\,\}$. 

But, for an affine function
$g=\alpha+\beta j+\gamma n$ and a target $\tau\in\Z$, substituting $j=j_0+s$
and $n-2j=c+s'$ with $s,s'≥0$ gives the finite test
\begin{equation}\label{eq:conetest}
  g(j,n)≥\tau\ \text{ on }\mathcal C
  \iff
  \beta+2\gamma≥0,\quad \gamma≥0,\quad
  \alpha+\beta j_0+\gamma(2j_0+c)≥\tau .
\end{equation}
The program \texttt{certify\_steps12.sage} proves in this way, for every tuple
of the tables, that the tuple satisfies \eqref{eq:shape} and that its triple,
as well as the triple computed from it by Step~3, satisfy \eqref{eq:adm}, in
the following ranges: every special row and every bridge entry, for all
$n≥4$; row~$r$ of pass~$1$ (resp.\ pass~$2$), at every $(n,j)$ with
$d≥ c_r$ (resp.\ $d≥ c'_r$) at which it occurs in the definition of
$(y_m)$, where
\[
  (c_1,\dots,c_9)=(1,3,3,2,0,4,4,3,1),\qquad
  (c'_1,\dots,c'_9)=(3,3,2,0,2,4,4,3,1).
\]
No value of $n$ or $j$ is ever substituted.

\emph{Coverage.} It remains to check that every tuple is used in $(y_m)$ only
inside its range. All the bounds $c_r,c'_r$ are at most $4$, and the full
blocks used in (a) and (c) have $d≥4$. The incomplete last block of pass~$1$
uses row~$1$ at $d=2$ (even $n$; $c_1=1$) or rows $1$--$5$ at $d=3$ (odd $n$;
$c_1,\dots,c_5≤3$). The $j$-free tuples are used only for $n≥4$. Hence
Corollary~\ref{cor:step} applies to every pair $(y_m,y_{m+1})$.

Finally $y_1$ has the triple $(A_1,B_1,C_1)$, by \eqref{eq:b8}. Since
$y_m\to y_{m+1}$ for every $m$, induction on $m$ gives $y_m=x_m$.
\end{proof}

\emph{Worked instance.} Take $m=9j+3$ in pass~1 with $j≥1$. The
table (Appendix~\ref{sec:pass1}) reads
\begin{align*}
A&=2q^{d-3},\qquad K=2q^{d-3},\qquad
B=-\frac{Q}{q-1}\,(1+q^{n})-\frac{2(q-2)}{q(q-1)}\,q^{d},\\
C&=\frac{Qq^{2}(q-2)}{2(q-1)^{2}}\,(1+q^{n})+\frac{q(q-2)^{2}}{2(q-1)^{2}}\,q^{d}
   +\frac{Qq^{4}}{2(q-1)^{2}}\,q^{2j},\qquad
D=\frac{q^{d-1}-Q}{q-1}=[d-1]_q-q .
\end{align*}
The orders of the terms of $A$ are $d-3$; of $B$, $0$ (with
$-Q/(q-1)=1+O(q)$), $n$ and $d-1$; of $C$, $2$, $n+2$, $d+1$ and $2j+4$. With
the cancellation of $(q-1)^{e}$, the triple satisfies \eqref{eq:adm} for $d≥3$. Next,
$k=d-3≥0$, $D(0)=1$ and $\deg D≤ d-2=k+1$ (with equality for $d≥4$; for
$d=3$, $D=1$). Step~3 gives
\[
  A^*=-Q\,[n]_q-q^{d-1}+Q\,q^{2j+1},\qquad B^*=Q\,[n]_q+2q^{d-1},
\]
which are the $A,B$ of row~$4$; they are polynomials, and $B^*(0)=1$ for
$d≥2$. Hence the row $\to$ row~$4$ on $\{j≥1,\ d≥3\}$,
i.e.\ $c_3=3$.

\begin{rem}[Why a pass ends]\label{rem:exits}
The bridges are forced by Corollary~\ref{cor:step}. For instance, for even
$n$, at the last block $d=2$ of pass~$1$, row~$1$ $\to$ the quintuple with the
triple of row~$2$; but the triple of row~$3$ does not satisfy \eqref{eq:adm}
there ($A=2q^{d-3}=2q^{-1}$), so the $K,D$ of row~$2$ at $d=2$ are not those of
Algorithm~\nabc (end of \S\ref{ssec:onestep}), and the quintuple at index $t_n$
must be given by another formula: this is $\mathrm{c}1$, which has the same
triple as row~$2$ at $d=2$ but its own $K,D$. 
\\
Similarly, for odd $n$, row~$7$
of pass~$1$ has $A=q^{d-4}$, not a polynomial at $d=3$; and in pass~$2$, row~$2$
has $A=Q\,q^{d-3}$, not a polynomial at $d=2$. The header $n-2j≥ h_r$ of
row~$r$ in the appendices gives the exact range on which the triple of this
generic row satisfies \eqref{eq:adm}, except for row~$1$ of each pass, whose
triple satisfies \eqref{eq:adm} also at $n-2j=-1$ (a value that never occurs).
\end{rem}

\subsection{Proof of Theorem~\ref{mainthmA}}\label{ssec:proofA}

\begin{prop}\label{prop:periodic}
For every $n≥1$, the sequence of quintuples $(x_m)$ associated with
$\Phi_n^{(n+2)}$ is periodic from $m=2$ on, with period $p_n$ (defined in
Theorem~\ref{mainthmA}), and $x_{1+p_n}$ has the triple
$(-A_1,B_1,-C_1)$.
\end{prop}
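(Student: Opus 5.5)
The plan is to split into the small cases $n\le 3$ and the generic case $n\ge4$, and in both to reduce periodicity to a single coincidence of triples. The tool is the remark after \cref{defi:quint}: $x_{m+1}$ depends only on the triple $(A_m,B_m,C_m)$. Hence if $(A_{p_n+2},B_{p_n+2},C_{p_n+2})=(A_2,B_2,C_2)$, then $x_{m+p_n}=x_m$ for all $m\ge2$, by induction on $m$. This is exactly \eqref{quintper}.

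For $n=1,2,3$, I would appeal to \S\ref{ssec:small}. The program \texttt{verify\_base.sage} runs Algorithm~\nabc exactly, with rational arithmetic, from the triple of Proposition~\ref{prop:init}. It produces the quintuples listed in Appendices~\ref{sec:n1}--\ref{sec:n3}. From these one reads off the two facts we need:
\begin{itemize}
\item $(A_{p_n+2},B_{p_n+2},C_{p_n+2})=(A_2,B_2,C_2)$ with $p_n=2,9,14$;
\item $x_{1+p_n}$ has the triple $(-A_1,B_1,-C_1)$.
\end{itemize}
Since these are finitely many explicit polynomial computations, this settles both assertions for these $n$.

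For $n\ge4$, Proposition~\ref{prop:tables} gives $x_m=y_m$ for all $m\ge1$. Property (e) in the definition of $(y_m)$, valid for either parity, states that $y_{m+p_n}=y_m$ for $m\ge2$, which gives the periodicity. By (d), $y_{1+p_n}=\mathrm{b}8$, and by \eqref{eq:b8} its triple is $(-A_1,B_1,-C_1)$.

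One point needs care. Property (e) is a \emph{definition} of $y_m$ for $m>1+p_n$. So the real content is the consistency requirement $\mathrm{b}8\to y_2$, which is one of the steps certified in the proof of Proposition~\ref{prop:tables}. I would make this explicit as follows.

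Apply Step~3 to $\mathrm{b}8=(-A_1,B_1,-C_1;-K_1',D)$, where $K=q^{n-2}$, so $a=1$ instead of $-1$. Formula \eqref{eq:AA} is invariant under $(A,C,a)\mapsto(-A,-C,-a)$, and so are \eqref{eq:BB} and \eqref{eq:CC}, because $D$ and $k$ are unchanged. Hence the triple produced from $\mathrm{b}8$ equals the one produced from $x_1$, namely $(A_2,B_2,C_2)$.

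The same symmetry shows why Steps~1--2 give the same $D$. In \eqref{eq:defD}, the quantities $aB/A$ and $aC/B$ are invariant under this sign flip. This is the cleanest way to see that $\mathrm{b}8\to x_2$, rather than relying only on the certificate.

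The main obstacle is not this final argument, which is short, but the correctness of Proposition~\ref{prop:tables} itself, i.e.\ the symbolic certification. Since that proposition is already established, the remaining work is bookkeeping. I would verify the counts of terms per period given after the definition of $(y_m)$, and check that the index $1+p_n$ indeed lands on $\mathrm{b}8$ for both parities.
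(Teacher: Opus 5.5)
Your proposal is correct and follows the paper's proof: the cases $n\le3$ come from the explicit runs of Algorithm~\nabc in \S\ref{ssec:small}, and for $n\ge4$ both periodicity and the triple of $x_{1+p_n}$ follow from Proposition~\ref{prop:tables} together with properties (d), (e) and \eqref{eq:b8}. Your extra sign-flip argument is also valid: $(A,C,a)\mapsto(-A,-C,-a)$ leaves $k$, $D$ and the Step~3 output unchanged, so $\mathrm{b}8\to y_2$ holds without a separate certificate, and this is the same observation the paper uses in \S\ref{ssec:antiper} to obtain $K_{1+P}=-K_1$.
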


\begin{proof}
For $n≤3$ see \S\ref{ssec:small}. For $n≥4$, $x_m=y_m$ by
Proposition~\ref{prop:tables}, and $(y_m)$ is periodic from $m=2$ with period
$p_n$ by construction; $x_{1+p_n}=\mathrm{b}8$ has the triple
$(-A_1,B_1,-C_1)$ by \eqref{eq:b8}.
\end{proof}

Notice that the pre-period is exactly $1$: if $x_1$ recurred, it would recur at index
$1+p_n$, but $x_{1+p_n}\ne x_1$.

\begin{proof}[Proof of Theorem~\ref{mainthmA}]
By \eqref{HfracABC2}, the $H$-fraction of $F=\Phi_n^{(n+2)}$ is
\[
  F=\frac{\alpha_0}{\beta_0}\cfp\frac{\alpha_1}{\beta_1}\cfp
  \frac{\alpha_2}{\beta_2}\cfp\cdots,
\]
with $\alpha_0=-K_1$, $\beta_0=D_1$, and
$\alpha_i=K_{i+1}\,q^{k_i+2}$, $\beta_i=D_{i+1}$ for $i≥1$.
For $i≥1$, the term $\alpha_i/\beta_i$ depends only on $x_i$ and $x_{i+1}$.
By Proposition~\ref{prop:periodic}, $x_{i+p_n}=x_i$ for $i≥2$, and
$k_{1+p_n}=\val A_{1+p_n}=\val A_1=k_1$; hence
$\alpha_{i+p_n}/\beta_{i+p_n}=\alpha_i/\beta_i$ for all $i≥1$. This is the
claimed form, with $\cP_n(q)$ the block of the terms $i=1,\dots,p_n$, and
$\cI_n(q)=-K_1/D_1$. 
\\
Moreover, for $n≥2$, \eqref{eq:b8} gives $K_1=-q^{n-2}$ and
$D_1=(q^n-Q)/(q-1)$; for $n=1$, Appendix~\ref{sec:n1} gives $K_1=1$ and
$D_1=1+2q$.
\end{proof}

\section{The $(n+2)$-shifted Hankel determinants: proof of Theorem~\ref{mainthmB}}
\label{sec:Hankel}

\subsection{Proof of Theorem~\ref{mainthmB}.2: the values}

\Cref{mainthmA}  reduces part 2 of Theorem~\ref{mainthmB} to a finite
computation with the data $(\sigma_j,v_j)$ over one period: these are given by finitely many
affine formulas in $n$ and $j$, and we must show that the resulting Hankel determinants, computed with \eqref{eq:HankelDet}, lie in
$\{-2,-1,0,1,2\}$. 

The starting point is Theorem~\ref{th:super2} (part 2) that we apply to the $H$-fraction \eqref{HfracABC} of
$F=\Phi_n^{(n+2)}$ just built in previous section. With the data $(\sigma_j,v_j)$ of \eqref{HfracABC}, formula
\eqref{eq:HankelDet} says that the only nonzero Hankel determinants of $F$ are
the $\De_{s_j}(F)$ at the indices
\begin{equation}\label{eq:sj}
  s_0=0,\qquad s_j=\sigma_0+\sigma_1+\cdots+\sigma_{j-1}+j\quad(j≥1),
\end{equation}
and there
\begin{equation}\label{eq:detformula}
  \De_{s_j}(F)=(-1)^{\epsilon_j}\,v_0^{\,s_j}\,v_1^{\,s_j-s_1}\cdots
  v_{j-1}^{\,s_j-s_{j-1}},\qquad
  \epsilon_j=\sum_{i=0}^{j-1}\binom{\sigma_i+1}{2}.
\end{equation}
All other Hankel determinants of $F$ vanish; these account for the value $0$ in
Theorem~\ref{mainthmB}. It therefore remains to show that each product
\eqref{eq:detformula} equals $\pm1$ or $\pm2$.

Recall that $v_i=-a_{i+1}$ and $\sigma_i=k_{i+1}$. Thus the pair
$(v_i,\sigma_i)$ is read off  (in the pass-tables in the Appendices) from the leading monomial
$K_{i+1}=a_{i+1}q^{k_{i+1}}$ of the quintuple $x_{i+1}$, and it is clear that the partial numerators $v_i$ of the $H$-fraction are all of the form
$$v_i=\pm2^{e_i},\quad\text{with }e_i\in\Z.$$
Therefore the
product \eqref{eq:detformula} is
\begin{equation}\label{eq:2adic}
  \De_{s_j}(F)=(-1)^{\epsilon_j}
  \Bigl(\prod_{i<j}(\sgn v_i)^{\,s_j-s_i}\Bigr)\,2^{E_j},
  \qquad\text{where}\quad
  E_j:=\sum_{i=0}^{j-1}e_i\,(s_j-s_i).
\end{equation}
Thus $\De_{s_j}(F)\in\{\pm 2,\pm 1\}$ if and only if $E_j\in\{0,1\}$ for every
$j$ (the sign is automatically $\pm$). In other words, Theorem~\ref{mainthmB}.2 is equivalent
to the purely arithmetic statement
\begin{equation}\label{eq:Ebound}
  E_j\in\{0,1\}\qquad\text{for all }j≥0 .
\end{equation}

Recall $s_{i+1}-s_i=\sigma_i+1$, and let
\[
  T_i:=\sum_{l=0}^{i}e_l
\]
be the partial sums of the exponent sequence. Reversing the order of summation
in \eqref{eq:2adic},
\begin{equation}\label{eq:EsumdT}
  E_j=\sum_{i=0}^{j-1}e_i\sum_{l=i}^{j-1}(\sigma_l+1)
     =\sum_{l=0}^{j-1}(\sigma_l+1)\sum_{i=0}^{l}e_i
     =\sum_{l=0}^{j-1}(\sigma_l+1)\,T_l .
\end{equation}

\begin{lem}\label{lem:exponent}
For every $n≥1$ the data $(\sigma_i,e_i)$ of the $H$-fraction of
$\Phi_n^{(n+2)}$ satisfy:
\begin{enumerate}[label=\emph{(\roman*)},leftmargin=7ex]
\item $T_i\in\{-1,0,1\}$ for all $i≥0$;
\item $\sigma_i=0$ whenever $T_i\ne0$;
\item the partial sums $\sum_{l=0}^{j-1}T_l$ lie in $\{0,1\}$
for all $j≥0$.
\end{enumerate}
\end{lem}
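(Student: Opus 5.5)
The plan is to read the exponent data $(\sigma_i,e_i)$ directly off the leading monomials $K_{i+1}=a_{i+1}q^{k_{i+1}}$ of the quintuples $x_{i+1}$, which are now known explicitly. For $n\le 3$ this means Appendices~\ref{sec:n1}--\ref{sec:n3}. For $n\ge4$ it means the pass-tables, via Proposition~\ref{prop:tables}. Since $\sigma_i=k_{i+1}$ and $e_i=\log_2|a_{i+1}|$, each row of a pass (special or generic) and each bridge entry contributes a pair $(\sigma,e)$. Here $\sigma$ is an affine function of $(n,j)$, and $e\in\{-2,-1,0,1,2\}$ is independent of $n,j$: in the tables $a$ is always of the form $\pm2^{e}$ with $e$ a fixed constant per row. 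Periodicity (Proposition~\ref{prop:periodic}) together with $x_{1+p_n}$ having the triple $(-A_1,B_1,-C_1)$ shows that $|a_{1+p_n}|=|a_1|$. Hence the sequence $(e_i)_{i\ge0}$ is periodic from $i=0$ on with period $p_n$, and so is $(\sigma_i)$.

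The first step is to show that $T_{p_n-1}=\sum_{i=0}^{p_n-1}e_i=0$. This makes $(T_i)$ itself $p_n$-periodic, so (i) and (ii) reduce to one period. Then (iii) reduces to one period plus the vanishing of $\sum_{l=0}^{p_n-1}T_l$. The second step is to compute $e$ within one block of nine rows of each pass. From the worked examples (e.g.\ $n=2,4$, where the numerators run $2,\tfrac14,4,-\tfrac14,\dots$ and then reverse), I expect each pass block to have $e$-sum $0$. I also expect the partial sums inside a block to follow a fixed pattern of values in $\{-1,0,1\}$ that is independent of $j$. Concretely, the numerator $2q^{\cdot}$ gives $T=1$, then $\tfrac14$ gives $T=-1$, then $2$ or $4$ brings $T$ back to $0$ or $1$, and so on. The bridges are short and $j$-free, so their contributions are finitely many constants. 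Consequently $T_i$ at every index depends only on the row position and on which bridge is involved, and (i) becomes a finite inspection. For (ii), I check that the rows where $T\neq0$ have $K=\pm2^{e}q^{0}$, i.e.\ $\sigma=0$. This is visible in the examples: the factors $\tfrac14q^2,4q^2$ correspond to $\sigma=0$, since $\alpha_i=K_{i+1}q^{k_i+2}$. The positive powers $q^{d-3}$ and similar should occur only where $T=0$, so that they are harmless.

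For (iii), the within-block partial sums $\sum T_l$ again follow a fixed pattern per block. I need the block sum of the $T_l$ to be $0$ for each full block of both passes, and then to inspect the prefix sums inside a block and across the bridge transitions, checking that they stay in $\{0,1\}$. This is all finite and uniform in $n$. It can be certified by the same reasoning as the program \texttt{certify\_steps12.sage}, or simply by listing the nine $e$-values per pass row and the $e$-values of $\mathrm{b},\mathrm{c},\mathrm{d}$. The cases $n=1,2,3$ are direct from the displayed fractions; for $n=1$ all $e_i=0$, so everything is trivial.

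The main obstacle is organizational rather than conceptual. One must track how the running sums $T$ and $\sum T$ enter and leave the bridges for both parities, since pass~2 for odd $n$ starts at row~2 of a half-integer block. One must also verify that at the entrances to $\mathrm{c}1$ and $\mathrm{d}1$ (the truncated last block of pass~1) the accumulated values are exactly those needed for the bridge and the subsequent pass~2 to keep $T\in\{-1,0,1\}$ and $\sum T\in\{0,1\}$. I would first tabulate, for each pass row, the value of $T$ at the end of that row assuming $T=0$ at the block start. I would then check that the states at all junctions (pass~1 $\to$ bridge $\to$ pass~2 $\to$ $\mathrm{b}$ $\to$ $y_2$) match, so that the whole period reduces to concatenating these fixed patterns.
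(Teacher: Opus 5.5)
Your plan is correct and is essentially the paper's proof. You read $(k_m,a_m)$ off the appendix tables, using $\sigma_i=k_{i+1}$ and $e_i=\log_2|a_{i+1}|$, reduce to one period, and check that $T$ and $\sum T$ always return to $0$ across the passes and the bridges $\mathrm{b},\mathrm{c},\mathrm{d}$. The paper carries out the inspection you leave as expectations: the nonzero exponents occur only as the strings $1,-2,1$, $\;1,-2,2,-1,-1,2,-2,1$ (row~$1$ at $d=2$, then $\mathrm{c}1$--$\mathrm{c}5$, then rows $6,7$ of pass~$2$, for even $n$) and $1,-2,2,-2,2,-2,1$ (for $n=2$), and this confirms all your predictions: block sums are zero, $\sigma=0$ wherever $T\ne0$, and the junction states are as you describe.
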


\begin{proof}
Since $T$ changes only at the indices with
$e_i\ne0$, and $\sum_{l<j}T_l$ changes only at the indices with $T_l\ne0$, it
suffices to list the strings of consecutive indices at which $e_i\ne0$, and to
check that before each string $T=0$ and $\sum T=0$, and that along it
properties (i), (ii) and (iii) hold and both sums return to $0$. In all cases only the following
three strings of exponents occur; we give for each the values of $T$ and of
$\sum_{l≤ i}T_l$ along the string (starting from $0$), and the letters at
which $T\ne0$, where (ii) requires $\sigma=0$:
\begin{equation}\label{eq:strings}
\renewcommand{\arraystretch}{1.15}
\begin{array}{l|l|l|l}
 (e_i) & (T_i) & \bigl(\sum_{l≤ i}T_l\bigr) & \sigma=0\text{ at letters}\\\hline
 1,-2,1 & 1,-1,0 & 1,0,0 & 1,2\\
 1,-2,2,-1,-1,2,-2,1 & 1,-1,1,0,-1,1,-1,0 & 1,0,1,1,0,1,0,0 & 1,2,3,5,6,7\\
 1,-2,2,-2,2,-2,1 & 1,-1,1,-1,1,-1,0 & 1,0,1,0,1,0,0 & 1,\dots,6
\end{array}
\end{equation}
Between two strings all $e_i$ vanish, so $T$ and $\sum T$ stay at $0$.

For $n=1$ all partial numerators are $\pm1$ (Appendix~\ref{sec:n1}), so
$e_i\equiv0$ and there is nothing to prove. For $n=2$, by
Appendix~\ref{sec:n2}, $e_0=0$ and the exponents over one period
($i=1,\dots,9$) are $1,-2,2,-2,2,-2,1,0,0$, the third string of
\eqref{eq:strings}, with $\sigma_1=\dots=\sigma_6=0$. For $n=3$, by
Appendix~\ref{sec:n3}, $e_0=0$ and over one period ($i=1,\dots,14$) the
nonzero exponents are two copies of the first string, at $i=1,2,3$ and
$i=10,11,12$, with $\sigma_1=\sigma_2=\sigma_{10}=\sigma_{11}=0$.

Let $n≥4$. We follow the sequence $(x_m)=(y_m)$ of \S\ref{ssec:tables}, and
read $a_m$ (hence $e_{m-1}$, defined by $a_m=\pm2^{e_{m-1}}$) and $k_m$
($=\sigma_{m-1}$) from the values of $K_m=a_mq^{k_m}$ in the tables. In every
tuple, $a$ is a constant, independent of $n$ and $j$.

\emph{Generic rows.} For the generic rows of a pass, $a$ depends only on the
residue $r$ (for pass~$2$, $r$ is the residue in the index $9j+r$ of
Appendix~\ref{sec:pass2}), and $k$ is affine in $j$ and $d=n-2j$:
\[
\arraycolsep=3pt
\renewcommand{\arraystretch}{1.1}
\begin{array}{r|ccccccccc}
 r & 1&2&3&4&5&6&7&8&9\\\hline
 a,\ \text{pass 1} & 2 & -\tfrac14 & 2 & -1 & 1 & 1 & 1 & 1 & -1\\
 k,\ \text{pass 1} & 0&0&d{-}3&0&2j{-}1&1&d{-}4&0&2j\\
 e,\ \text{pass 1} & 1&-2&1&0&0&0&0&0&0\\\hline
 a,\ \text{pass 2} & 1 & 1 & 1 & -1 & 2 & -\tfrac14 & 2 & -1 & 1\\
 k,\ \text{pass 2} & 1&d{-}3&0&2j{-}1&0&0&d{-}4&0&2j\\
 e,\ \text{pass 2} & 0&0&0&0&1&-2&1&0&0
\end{array}
\]
So in a full block of either pass the nonzero exponents form the first string
of \eqref{eq:strings} (rows $1,2,3$ of pass~$1$, rows $5,6,7$ of pass~$2$), with
$k=0$ at its first two letters.

\emph{The other tuples.} The rows $m=1,\dots,6$ of pass~$1$ at $j=0$ and the
bridges have
\[
\arraycolsep=3pt
\renewcommand{\arraystretch}{1.1}
\begin{array}{r|cccccc|ccccc|ccc}
 & m{=}1&2&3&4&5&6 & \mathrm{c}1&\mathrm{c}2&\mathrm{c}3&\mathrm{c}4&\mathrm{c}5
 & \mathrm{d}1&\mathrm{d}2&\mathrm{d}3\\\hline
 a & -1&2&\tfrac14&2&-1&-1 & -\tfrac14&4&\tfrac12&\tfrac12&4 & 1&-1&-1\\
 k & n{-}2&0&0&n{-}3&0&1 & 0&0&n{-}3&0&0 & 1&n{-}1&1\\
 e & 0&1&-2&1&0&0 & -2&2&-1&-1&2 & 0&0&0
\end{array}
\]
\[
\arraycolsep=3pt
\renewcommand{\arraystretch}{1.1}
\begin{array}{r|cccccccc}
 & \mathrm{b}1&\mathrm{b}2&\mathrm{b}3&\mathrm{b}4&\mathrm{b}5&\mathrm{b}6&\mathrm{b}7&\mathrm{b}8\\\hline
 a & 1&-1&-1&2&\tfrac14&2&1&1\\
 k & 1&0&n{-}3&0&0&n{-}2&n{+}1&n{-}2\\
 e & 0&0&0&1&-2&1&0&0
\end{array}
\]

\emph{Along the sequence.} At $m=1$, $e_0=0$. The rows $m=2,3,4$ give the first
string, with $k=0$ at $m=2,3$, and the subsequent tuples of pass~$1$ give zeros
and copies of the first string, as above. For odd $n$ this goes on up to row~$5$
of the last block ($d=3$); then $\mathrm{d}1,\mathrm{d}2,\mathrm{d}3$ and the
rows $2,3,4$ of pass~$2$ at $j=\tfrac12$ give zeros. For even $n$, the last block
of pass~$1$ ($d=2$) contributes only row~$1$, and row~$1$ at $d=2$,
$\mathrm{c}1,\dots,\mathrm{c}5$ and the rows~$6$, $7$ of pass~$2$ at $j=0$ give
the exponents $1,-2,2,-1,-1,2,-2,1$, the second string of \eqref{eq:strings},
with $k=0,0,0,n-3,0,0,0,n-4$, hence $k=0$ at letters $1,2,3,5,6,7$. In both
cases pass~$2$ then gives zeros and copies of the first string, up to row~$9$ of
its last full block. Finally $\mathrm{b}1,\dots,\mathrm{b}8$ give
$0,0,0,1,-2,1,0,0$, the first string with $k=0$ at $\mathrm{b}4$,
$\mathrm{b}5$, and the sequence returns to $m=2$. This covers one period, and
the lemma follows.
\end{proof}

\cref{mainthmB}.2 is an immediate consequence of this lemma. Indeed, by (ii), $(\sigma_l+1)T_l=T_l$ for every $l$, so \eqref{eq:EsumdT} collapses to 
\begin{equation}\label{eq:ejtj}
E_j=\sum_{l=0}^{j-1}T_l,
\end{equation}
which lies in $\{0,1\}$ by
(iii). Thus \eqref{eq:Ebound} holds.

\subsection{Proof of Theorem~\ref{mainthmB}.1: the (anti-)periodicity}\label{ssec:antiper}

In this section we explain how the $p_n$-periodicity of the $H$-fraction of $\Phi_n^{(n+2)}$ --- or, equivalently, of the associated sequence of quintuples $x_m=(A_m,B_m,C_m;K_m,D_m)$ --- yields the $2n(n+1)$-periodicity (if $n$ is even) or $2n(n+1)$-antiperiodicity (if $n$ is odd) of the Hankel determinant sequence $\De^{(n+2)}$.

As usual, we let $n≥1$ be any integer and, to make the text easier to read, we put 
$$L:=2n(n+1)\quad\text{and}\quad P:=p_n.$$
By Proposition~\ref{prop:periodic},
$x_{m+P}=x_m$ for $m≥2$, and $x_{1+P}$ has the triple $(-A_1,B_1,-C_1)$.
Changing the signs of $A$ and $C$ does not change $k$, changes $a$ into $-a$,
and does not change $D$ in Step~2 of Algorithm~\nabc. Hence $K_{1+P}=-K_1$, and in
terms of the $H$-fraction this means:
\begin{equation}\label{eq:perdata}
  \sigma_{i+P}=\sigma_i\quad(i≥0),\qquad v_{i+P}=v_i\quad(i≥1),\qquad
  v_P=-v_0 .
\end{equation}
In particular,
\begin{equation}\label{eq:siP}
s_{i+P}=s_i+s_P,\qquad \epsilon_{i+P}=\epsilon_i+\epsilon_P,
\qquad e_{i+P}=e_i\quad\text{for all }i≥0.
\end{equation}

\begin{lem}\label{lem:sP} 
One has: $s_P=L$ and $s_{i+P}=s_i+L$ for all $i≥0$.
\end{lem}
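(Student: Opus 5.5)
The plan is to reduce the statement to computing one period sum of the exponents $k_m$. The second assertion is a formal consequence of the first, via \eqref{eq:siP}: $s_{i+P}=s_i+s_P$. So the work lies in $s_P=L$. By \eqref{eq:sj} and $\sigma_i=k_{i+1}$,
\[
s_P=\sum_{i=0}^{P-1}(\sigma_i+1)=P+\sum_{m=1}^{P}k_m .
\]
Since $k_{1+P}=k_1$ (the triple of $x_{1+P}$ is $(-A_1,B_1,-C_1)$), I may equally sum $k_m$ over $m=2,\dots,1+P$, i.e.\ over the tuples listed in one period of $(y_m)$. It therefore suffices to show $\sum_{m=1}^{P}k_m=2n(n+1)-p_n$.

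For $n=1,2,3$ this is a direct check of the finitely many values $k_m$ in Appendices~\ref{sec:n1}--\ref{sec:n3}. For $n=1$, $P=2$ and we need $\sum k_m=2$; for $n=2$, $\sum k_m=12-9=3$; for $n=3$, $\sum k_m=24-14=10$.

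For $n\ge4$ I would use the tables of $k$ reproduced in the proof of Lemma~\ref{lem:exponent}. The key observation is that a full generic block has constant $k$-sum. In pass~$1$ the sum is $0+0+(d-3)+0+(2j-1)+1+(d-4)+0+2j=2d+4j-7=2n-7$. Pass~$2$ gives the same value $1+(d-3)+0+(2j-1)+0+0+(d-4)+0+2j=2n-7$. Hence every full block contributes $9+(2n-7)=2n+2$ to $s_P$, independently of $j$. The remaining contributions come from three sources:
\begin{itemize}
\item the special rows $m=1,\dots,6$ at $j=0$, with $k=n-2,0,0,n-3,0,1$;
\item the bridges $\mathrm{b}$, $\mathrm{c}$ (even $n$) or $\mathrm{d}$ (odd $n$), whose $k$ values are read off the tables;
\item the incomplete blocks at both ends of each pass.
\end{itemize}
The incomplete blocks are: the beginning of pass~$1$ (rows $7,8,9$ at $j=0$, which are generic), the truncated last block of pass~$1$ (row~$1$ at $d=2$, resp.\ rows $1$--$5$ at $d=3$), and the truncated first block of pass~$2$ (from row~$6$ at $j=0$, resp.\ row~$2$ at $j=\tfrac12$). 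I would count the number of full blocks in each pass using the descriptions (a)--(e) of \S\ref{ssec:tables}. Each of the remaining contributions is then an explicit affine expression in $n$, obtained by substituting the appropriate fixed $j$ or $d$ into the generic $k$'s. Adding everything, I expect to obtain $2n^2+2n$ separately for even and odd $n$.

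The main obstacle is bookkeeping: getting the boundary partial blocks and the period starting point right. In particular, the period runs over $m=2,\dots,1+P$ while the sum starts at $m=1$, and for odd $n$ the pass~$2$ indices are half-integers. To guard against off-by-one errors, I would organise the sum as the number of full blocks times $(2n+2)$ plus a correction term for the partial blocks, special rows and bridges. I would then check the final formula numerically against the explicit $H$-fraction of $\Phi_4^{(6)}$ displayed after Theorem~\ref{mainthmA}, where $P=26$ and one needs $s_P=40$, i.e.\ $\sum k_m=14$.
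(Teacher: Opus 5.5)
Your proposal is correct and is essentially the paper's proof. The second assertion follows from $s_{i+P}=s_i+s_P$ in \eqref{eq:siP}, and $s_P=\sum_{m=1}^{P}(k_m+1)$ is computed segment by segment from the $k$-values in the tables: each full block of either pass contributes $9+(2n-7)=2n+2$, the special rows, partial blocks and bridges add explicit affine corrections totalling $2n(n+1)$ for both parities, and $n=1,2,3$ are read off the appendices (your values $\sum k_m=2,3,10$ and the $n=4$ check $\sum k_m=14$ are correct).
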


\begin{proof}
Because of \eqref{eq:siP} it suffices to prove first assertion. 
By definition, 
$$s_P=\sum_{m=0}^{P-1}(\sigma_m+1)=\sum_{m=1}^{P}(k_m+1).$$
For $n≥4$,
summing the values of $k$ in the tables segment by segment (one full block of
either pass contributes $9+(2n-7)=2n+2$, by the $k$-table in the proof of
Lemma~\ref{lem:exponent}) gives
\[
\renewcommand{\arraystretch}{1.1}
\begin{array}{l|l|l}
 \text{segment} & n\text{ even} & n\text{ odd}\\\hline
 m=1,\dots,6\ (j=0) & 2n+2 & 2n+2\\
 \text{pass 1, rows }7,8,9\text{ at }j=0 & n-1 & n-1\\
 \text{full blocks of pass 1} & (\tfrac n2-2)(2n+2) & \tfrac{n-5}2(2n+2)\\
 \text{last block of pass 1} & 1\ \text{(row 1)} & n+1\ \text{(rows 1--5)}\\
 \text{bridge c, resp.\ d} & n+2 & n+4\\
 \text{pass 2 before its full blocks} & n\ \text{(rows 6--9, } j=0) & 2n\ \text{(rows 2--9, } j=\tfrac12)\\
 \text{full blocks of pass 2} & (\tfrac n2-2)(2n+2) & \tfrac{n-5}2(2n+2)\\
 \mathrm{b}1,\dots,\mathrm{b}7 & 3n+4 & 3n+4\\\hline
 \text{total} & 2n(n+1) & 2n(n+1)
\end{array}
\]
Hence, for both parities of $n≥4$ we obtain $s_P=2n(n+1)=L$.
For $n=1,2,3$ the sum over the tables of
Appendices~\ref{sec:n1}--\ref{sec:n3} gives easily the same result. 
\end{proof} 

Next, we connect the values of $\De_{s_i+L}(F)$ and $\De_{s_i}(F)$.
\begin{lem}\label{lem:shiftP}
Put $\Pi:=\prod_{l=0}^{P-1}v_l$. We have: $\De_{s_i+L}(F)=(-\Pi)^{s_i}\,\De_L(F)\,\De_{s_i}(F)$
for all $i≥0$.
\end{lem}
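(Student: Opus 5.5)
We apply formula \eqref{eq:detformula} at the index $i+P$ and compare it with the same formula at indices $i$ and $P$. By Lemma~\ref{lem:sP} we have $s_{i+P}=s_i+L$ and $s_P=L$. Hence
\[
\De_{s_i+L}(F)=(-1)^{\epsilon_{i+P}}\prod_{l=0}^{i+P-1}v_l^{\,s_i+L-s_l}.
\]
We split this product into the factors with $0\le l\le P-1$ and those with $P\le l\le P+i-1$.

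\emph{Second factor.} Write $l=P+l'$ with $0\le l'\le i-1$. Then $s_l=s_{l'}+L$, so the exponent is $s_i-s_{l'}$. By \eqref{eq:perdata}, $v_{P+l'}=v_{l'}$ for $l'\ge1$, while $v_P=-v_0$. This factor therefore equals
\[
(-1)^{s_i}\prod_{l'=0}^{i-1}v_{l'}^{\,s_i-s_{l'}}.
\]

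\emph{First factor.} Here $v_l^{\,s_i+L-s_l}=v_l^{\,L-s_l}\,v_l^{\,s_i}$, so this factor equals
\[
\Bigl(\prod_{l=0}^{P-1}v_l^{\,L-s_l}\Bigr)\,\Pi^{s_i}.
\]

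Collecting the pieces, and using $\epsilon_{i+P}=\epsilon_i+\epsilon_P$ from \eqref{eq:siP}, we get
\[
\De_{s_i+L}(F)=(-1)^{s_i}\Pi^{s_i}\cdot\Bigl[(-1)^{\epsilon_P}\prod_{l<P}v_l^{\,s_P-s_l}\Bigr]\cdot\Bigl[(-1)^{\epsilon_i}\prod_{l<i}v_l^{\,s_i-s_l}\Bigr].
\]
By \eqref{eq:detformula} and $s_P=L$, the two brackets are exactly $\De_L(F)=\De_{s_P}(F)$ and $\De_{s_i}(F)$. This gives the claimed identity
\[
\De_{s_i+L}(F)=(-\Pi)^{s_i}\,\De_L(F)\,\De_{s_i}(F).
\]

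The only delicate point is the bookkeeping for the shifted factor with $l'=0$. There the sign from $v_P=-v_0$ is raised to the power $s_i-s_0=s_i$, and this is precisely what produces the $(-1)^{s_i}$ that combines with $\Pi^{s_i}$. For $i=0$ the identity is trivial, since $s_0=0$ and $\De_0=1$. We use the relations \eqref{eq:perdata} for all $l\le P+i-1$, and these hold by Proposition~\ref{prop:periodic}.
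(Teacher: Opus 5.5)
Your proof is correct and follows the paper's own argument essentially step for step. Like the paper, you split the product in \eqref{eq:detformula} at index $i+P$ into the factors with $l<P$, which give $\Pi^{s_i}$ times the product for $\De_{s_P}$, and those with $l\ge P$, which give $(-1)^{s_i}$ times the product for $\De_{s_i}$ via $v_P=-v_0$. You then combine the signs using $\epsilon_{i+P}=\epsilon_i+\epsilon_P$ and $s_P=L$, exactly as the paper does.
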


\begin{proof}
By definition \eqref{eq:detformula},
$
\De_{s_{i+P}}=(-1)^{\epsilon_{i+P}}\,
  v_0^{\,s_{i+P}}v_1^{\,s_{i+P}-s_1}\cdots v_{i+P-1}^{\,s_{i+P}-s_{i+P-1}}
$.
We split this expression into two parts, by setting
\begin{equation*}
\de:=(-1)^{\epsilon_i+\epsilon_P}\,v_0^{\,s_{i+P}}v_1^{\,s_{i+P}-s_1}\cdots v_{P-1}^{\,s_{i+P}-s_{P-1}},
\qquad\de':=v_{P}^{\,s_{i+P}-s_{P}}\cdots v_{i+P-1}^{\,s_{i+P}-s_{i+P-1}}.
\end{equation*}
Since $s_{i+P}-s_l=s_i+(s_P-s_l)$ and $\epsilon_{i+P}=\epsilon_P+\epsilon_i$ by \eqref{eq:siP}, one has 
$$\de=(-1)^{\epsilon_i+\epsilon_P}\,\Pi^{s_i}\prod_{l=0}^{P-1}v_l^{s_P-s_l}.$$
On the other hand, using \eqref{eq:perdata} and again \eqref{eq:siP}, one gets
\begin{equation*}
\de'=\prod_{l=0}^{i-1}v_{l+P}^{s_{i+P}-s_{l+P}}=(-v_0)^{s_i}\prod_{l=1}^{i-1}v_{l}^{s_{i}-s_{l}}=(-1)^{s_i}\prod_{l=0}^{i-1}v_{l}^{s_{i}-s_{l}}.
\end{equation*}
Gathering all together gives
\begin{align*}
\De_{s_{i+P}}&=(-1)^{\epsilon_{P}}\,\Pi^{s_i}\prod_{l=0}^{P-1}v_l^{s_P-s_l}\times (-1)^{\epsilon_{i}}\,(-1)^{s_i}\prod_{l=0}^{i-1}v_{l}^{s_{i}-s_{l}}\\
&=(-\Pi)^{s_i}\,\De_{s_P}(F)\,\De_{s_i}(F),
\end{align*}
which, by \cref{lem:sP}, is the expected result.
\end{proof}

Our last step consists in computing the constants in \cref{lem:shiftP}.
\begin{lem}\label{lem:sign}
For every $n≥1$: \textup{(i)} $\Pi=-1$; \textup{(ii)} $\De_L(F)=(-1)^n$.
\end{lem}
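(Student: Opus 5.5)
For (i) we use that $\Pi=\prod_{l=0}^{P-1}v_l=\prod_{m=1}^{P}(-a_m)=(-1)^P\prod_{m=1}^P a_m$. This is a purely finite computation with the constants $a$ listed in the proof of Lemma~\ref{lem:exponent}. Its absolute value is $2^{T_{P-1}}$. By Lemma~\ref{lem:exponent} we have $T$ back at $0$ at the end of each period, since every string returns to $0$ and between strings $T=0$. Hence $|\Pi|=1$, and only the sign has to be determined.

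For $n\ge4$ we count the negative $a_m$ segment by segment, exactly as in the table of Lemma~\ref{lem:sP}.
\begin{itemize}
\item Each full block of pass~1 has negative $a$ at rows $2,4,9$, giving $3$ negatives. Each full block of pass~2 has negative $a$ at rows $4,6,8$, again $3$.
\item The special rows $m=1,\dots,6$ contribute negatives at $m=1,5,6$. Rows $7,8,9$ at $j=0$ contribute one, at row~9.
\item The bridges contribute: $\mathrm{b}$ has $2$ negatives ($\mathrm{b}2,\mathrm{b}3$), $\mathrm{c}$ has $1$ ($\mathrm{c}1$), and $\mathrm{d}$ has $2$ ($\mathrm{d}2,\mathrm{d}3$).
\item The partial blocks are row~1 of pass~1 at $d=2$, rows $1$--$5$ of pass~1 at $d=3$, rows $6$--$9$ of pass~2 at $j=0$, and rows $2$--$9$ of pass~2 at $j=\tfrac12$. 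Each is read off the $a$-table.
\end{itemize}
Adding these with the block counts $\tfrac n2-2$ or $\tfrac{n-5}2$ gives the number $N$ of negative $a_m$. Then $\Pi=(-1)^{P+N}$. The claim is that $P+N$ is odd in both parity cases; this is routine parity bookkeeping of affine expressions in $n$. The cases $n=1,2,3$ are checked directly from Appendices~\ref{sec:n1}--\ref{sec:n3}. Note that $m$ runs from $1$ to $P$, so $m=1$ (the tuple $y_1$) is included and $\mathrm{b}8$ is not; the $\mathrm{b}$-segment is thus $\mathrm{b}1,\dots,\mathrm{b}7$, as in Lemma~\ref{lem:sP}.

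For (ii), $L=s_P$ by Lemma~\ref{lem:sP}, so formula \eqref{eq:2adic} gives
$$\De_L(F)=(-1)^{\epsilon_P}\prod_{i<P}(\sgn v_i)^{s_P-s_i}\,2^{E_P}.$$
By \eqref{eq:ejtj}, $E_P=\sum_{l<P}T_l$, which is $0$ since the cumulative sums also return to $0$ after each string. It remains to compute the sign. Write $\sum_{i<P}\mathbf 1[v_i<0]\,(s_P-s_i)$ with $s_P-s_i=\sum_{l=i}^{P-1}(k_{l+1}+1)$, and $\epsilon_P=\sum_{m=1}^{P}\binom{k_m+1}{2}$. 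Both are sums of quadratic polynomials in $j$ over the blocks, taken modulo $2$. I would organise this with the same segment decomposition, computing for each segment three quantities:
\begin{itemize}
\item the contribution to $\epsilon$ mod $2$, using $\binom{k+1}{2}$ for each row's $k$, which is affine in $j,d$;
\item the number of negative $v$'s in the segment;
\item the segment's total $\sum(k+1)$, which is already in Lemma~\ref{lem:sP}.
\end{itemize}
Then $\sum_i \mathbf 1[v_i<0](s_P-s_i)$ mod $2$ is a "weighted inversion" count: each negative $v_i$ is weighted by the length of the tail after it. To evaluate this, I would reverse the order of summation, as in \eqref{eq:EsumdT}, giving $\sum_l (k_{l+1}+1)\,N_{\le l}$, where $N_{\le l}$ is the number of negative $v_i$ with $i\le l$. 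This turns the computation into a per-block sum of parities, which is periodic in $j$ modulo $2$ or $4$. The sum over $j$ is then evaluated by splitting $j$ by parity.

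The main obstacle is this parity bookkeeping for (ii). I expect it to be error-prone, with separate residues of $n$ modulo $4$ to track. In keeping with the paper, I would delegate it to a short symbolic SageMath check: compute the exponent of $-1$ as a quasi-polynomial in $n$ over each residue class of $n$ mod $4$, and verify that it is congruent to $n$ mod $2$. I would cross-check against direct evaluation of $\De_L(F)$ for $n=1,\dots,8$ from the algorithm. Combined with Lemma~\ref{lem:shiftP}, this yields $\De_{s_i+L}=(-1)^n\De_{s_i}$.
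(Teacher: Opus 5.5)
Your proposal is correct and is essentially the paper's proof: $|\Pi|=2^{T_{P-1}}=1$ and $E_P=0$ come from Lemma~\ref{lem:exponent}, the sign in (i) is a segment-by-segment count, and (ii) is reduced to a symbolic parity computation. Your tally in (i) gives $N=3n-3$ negative $a_m$ for even $n$ and $N=3n-2$ for odd $n$, so $P+N$ is odd, matching the paper's $6n-7$ resp.\ $6n-11$ positive ones. The only difference is how the parity computation in (ii) is organised: the paper uses that $s_P=L$ is even to replace $s_P-s_l$ by $s_l$, then computes the exact polynomial $\theta(n)$ in $t$ before reducing mod~$2$ (via an integer-valuedness check at $t=0,1,2,3$), which spares you the mod-$2$/mod-$4$ quasi-polynomial bookkeeping you anticipate.
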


\begin{proof}
\emph{\textup{(i)}} Since $e_{l+P}=e_l$ by \eqref{eq:siP}, we have $T_{l+P}=T_{P-1}+T_l$ for
$l≥0$, hence 
$$T_{l+mP}=mT_{P-1}+T_l$$
 for any $l,m≥0$. But $T$ is bounded by \cref{lem:exponent}(i), so that $T_{P-1}=0$, i.e.\ $|\Pi|=2^{T_{P-1}}=1$.
On the other hand, as $v_l=-a_{l+1}$, the sign of $\Pi$ is
$$(-1)^{P}\prod_{l=1}^P\sgn a_l=(-1)^{P-\#\{1≤l≤ P:\ a_l<0\}}=(-1)^{\#\{1≤l≤ P:\ a_l>0\}}.$$
For $n≥4$ we count the positive $a_l$, $l=1,\dots,P$, in the tables: a full
block of either pass has $6$ of them, and the remaining segments give
\[
\renewcommand{\arraystretch}{1.1}
\begin{array}{l|cccccc|c}
 & l≤6 & \text{rows }7\text{--}9 & \text{last block} & \text{bridge c/d} & \text{pass 2 start} & \mathrm{b}1\text{--}\mathrm{b}7 & \text{total}\\\hline
 n\text{ even} & 3&2&1&4&2&5&17\\
 n\text{ odd} & 3&2&3&1&5&5&19
\end{array}
\]
which is odd in both cases. Thus $\sgn(\Pi)=-1$. 
\\
For $n=1,2,3$ the sign is read off directly from
Appendices~\ref{sec:n1}--\ref{sec:n3}.

\emph{\textup{(ii)}} By \eqref{eq:ejtj}, $E_j=\sum_{0≤l<j}T_l$. Since
$T_{l+P}=T_l$ by the proof of (i), one gets
$$E_{j+P}=\sum_{l=0}^{P-1}T_l+\sum_{l=P}^{P+j-1}T_l=E_P+E_j$$ for all $j$, hence $E_{j+mP}=mE_P+E_j$ for all $m≥0$. Then, as above, the fact that $E$ is
bounded forces $E_P=0$. By \eqref{eq:2adic}, 
it follows that
\begin{equation*}
  \De_L(F)=\De_{s_P}(F)=(-1)^{\epsilon_P}\prod_{l=0}^{P-1}(\sgn v_l)^{\,s_P-s_l}
\end{equation*}
and eventually, since $s_P=L$ is even:
\begin{equation*}
\De_L(F)=(-1)^{\theta(n)}\quad\text{with}\quad \theta(n):=\epsilon_P+\sum_{\substack{0≤l≤P-1\\ v_l<0}}s_l .
\end{equation*}
For $n≥4$, write $n=2t$ or $n=2t+1$. Along one period every $k_m$ is an
affine function of $t$ and of the block number $j$ (read off the values of $K$),
so every $s_l=\sum_{m=1}^l (k_m+1)$ is a polynomial in $t$ and $j$, and $\theta(n)$ is obtained by exact
summation over the segments above. The companion program
\texttt{certify\_sign.sage} carries this out symbolically and finds
\[
  \theta(2t)=\tfrac{160}3t^3-24t^2+\tfrac23t,\qquad
  \theta(2t+1)=\tfrac{160}3t^3+40t^2+\tfrac23t-1 .
\]
In both cases $(\theta(n)-n)/2$ is a polynomial in $t$ of degree $3$ taking integer
values at $t=0,1,2,3$, hence at every integer (Newton's forward-difference
formula); so $\theta(n)\equiv n\pmod2$ and (ii) is proved when $n≥4$.\\
 For $n=1,2,3$, (ii) is a simple finite computation
(\texttt{verify\_base.sage}).
\end{proof}

We can now conclude the proof of the (anti-)periodicity statement in Theorem~\ref{mainthmB}.  By \cref{lem:shiftP} and \cref{lem:sign},
$$\De_{s_i+L}(F)=(-1)^n\De_{s_i}(F)$$
 for all $i≥0$. Since $s_{i+P}=s_i+L$ by \cref{lem:sP}, an
index $j≥0$ is of the form $s_i$ if and only if $j+L$ is, and the other
indices give $0$ on both sides. Finally, we have proved that 
$$\De^{(n+2)}_{j+2n(n+1)}=(-1)^n\De^{(n+2)}_j$$
for all $j≥0$, the aimed result.

\begin{rem}\label{rem:LIN}
In a forthcoming paper we show that $\De^{(n+2)}$ is a signed difference of
two shifts of the ordinary Hankel sequence: for all $n≥1$ and $j≥0$,
\[
  \De_j^{(n+2)}=\epsilon_n(-1)^{nj}\Bigl(\De^{(0)}_{j+n(n+1)+1}
  +(-1)^{n+1}\De^{(0)}_{j+(n+1)(n+2)}\Bigr),
  \qquad
  \epsilon_n=\begin{cases}1,&4\mid n,\\-1,&4\nmid n.\end{cases}
\]
Combined with the $2n(n+1)$-(anti)periodicity of $\De^{(0)}$
\cite{HP}, this gives an alternative proof of the periodicity part of
Theorem~\ref{mainthmB}.
\end{rem}

\appendix

\section{Base case $n=1$}\label{sec:n1}

As for all cases, the initial triple is given by Proposition~\ref{prop:init}. The sequence has pre-period $1$
and period $2$ (from $m=2$), which means that $m=3$ maps back to $m=2$; the three
transitions are identities between concrete polynomials in $q$, computed in
\texttt{verify\_base.sage}. Note that all
$a_m=\pm1$ and that
$\sum_m\bigl(k_m+1\bigr)=4=2n(n+1)$ over one period.

For all $m≥1$ one has $B_m=2q^{3}-q^{2}+q+1$; the other data are listed below:

\smallskip\noindent $m=1$:
\begin{align*}
A_m &= Q, \qquad C_m = q^{4}, \qquad K_m = 1, \qquad D_m = 2\,q+1
\end{align*}
\smallskip\noindent $m=2$:
\begin{align*}
A_m &= -q^{2}, \qquad C_m = -Q q^{2}, \qquad K_m = -q^{2}, \qquad D_m = B_m
\end{align*}
\smallskip\noindent $m=3$:
\begin{align*}
A_m &= -Q, \qquad C_m = -q^{4}, \qquad K_m = -1, \qquad D_m = 2\,q+1
\end{align*}

\section{Base case $n=2$}\label{sec:n2}

This appendix lists the $10$ tuples
$\bigl(A_m,B_m,C_m;K_m,D_m\bigr)$ for $m=1,\dots,10$,
computed by \texttt{verify\_base.sage}. The period is $9$ (from $m=2$), and
$m=10$ maps back to $m=2$.

\smallskip
\noindent $m=1$:
\begin{align*}
A_m &= -1, \qquad B_m = -q^{3} + 2 \, q^{2} + 1, \qquad C_m = q^{5}, \qquad K_m = -1, \qquad D_m = 1
\end{align*}
\smallskip\noindent $m=2$:
\begin{align*}
A_m &= q^{3} - q + 2, \qquad B_m = q^{3} - 2 \, q^{2} + 1, \qquad C_m = -q^{2},\\
 K_m &= 2, \qquad D_m = \frac{1}{2} \, q + 1
\end{align*}
\smallskip\noindent $m=3$:
\begin{align*}
A_m &= -\frac{1}{8} \, q^{3} - \frac{3}{8} \, q + \frac{1}{4}, \qquad B_m = \frac{1}{2} \, q^{4} + \frac{3}{2} \, q^{2} + 1, \qquad C_m = -\frac{1}{2} \, q^{5} + \frac{1}{2} \, q^{3} - q^{2}\\
K_m &= \frac{1}{4}, \qquad D_m = \frac{3}{2} \, q + 1
\end{align*}
\smallskip\noindent $m=4$:
\begin{align*}
A_m &= 2 \, q^{3} + 2 \, q^{2} + 6 \, q + 4, \qquad B_m = -2 \, q^{4} - q^{3} - 6 \, q^{2} + 1, \qquad C_m = \frac{1}{2} \, q^{5} + \frac{3}{2} \, q^{3} - q^{2}\\
K_m &= 4, \qquad D_m = -\frac{3}{2} \, q + 1
\end{align*}
\smallskip\noindent $m=5$:
\begin{align*}
A_m &= -\frac{1}{8} \, q^{3} - \frac{1}{8} \, q^{2} - \frac{3}{8} \, q - \frac{1}{4}, \qquad B_m = \frac{1}{2} \, q^{4} + \frac{1}{2} \, q^{3} + \frac{5}{2} \, q^{2} + 1\\
C_m &= -\frac{1}{2} \, q^{5} - \frac{1}{2} \, q^{4} - \frac{3}{2} \, q^{3} - q^{2}, \qquad K_m = -\frac{1}{4}, \qquad D_m = -\frac{3}{2} \, q + 1
\end{align*}
\smallskip\noindent $m=6$:
\begin{align*}
A_m &= -2 \, q^{3} - 6 \, q + 4, \qquad B_m = -2 \, q^{4} - q^{3} - 6 \, q^{2} + 1, \qquad C_m = -\frac{1}{2} \, q^{5} - \frac{1}{2} \, q^{4} - \frac{3}{2} \, q^{3} - q^{2}\\
K_m &= 4, \qquad D_m = \frac{3}{2} \, q + 1
\end{align*}
\smallskip\noindent $m=7$:
\begin{align*}
A_m &= \frac{1}{8} \, q^{3} - \frac{1}{8} \, q + \frac{1}{4}, \qquad B_m = \frac{1}{2} \, q^{4} + \frac{3}{2} \, q^{2} + 1, \qquad C_m = \frac{1}{2} \, q^{5} + \frac{3}{2} \, q^{3} - q^{2}, \qquad K_m = \frac{1}{4}\\
D_m &= \frac{1}{2} \, q + 1
\end{align*}
\smallskip\noindent $m=8$:
\begin{align*}
A_m &= 2, \qquad B_m = q^{3} - 2 \, q^{2} + 1, \qquad C_m = -\frac{1}{2} \, q^{5} + \frac{1}{2} \, q^{3} - q^{2}, \qquad K_m = 2, \qquad D_m = 1
\end{align*}
\smallskip\noindent $m=9$:
\begin{align*}
A_m &= q^{3}, \qquad B_m = -q^{3} + 2 \, q^{2} + 1, \qquad C_m = -q^{2}, \qquad K_m = q^{3}\\
D_m &= -q^{3} + 2 \, q^{2} + 1
\end{align*}
\smallskip\noindent $m=10$:
\begin{align*}
A_m &= 1, \qquad B_m = -q^{3} + 2 \, q^{2} + 1, \qquad C_m = -q^{5}, \qquad K_m = 1, \qquad D_m = 1
\end{align*}

\section{Base case $n=3$}\label{sec:n3}

This appendix lists the $15$ tuples for $m=1,\dots,15$, computed by
\texttt{verify\_base.sage}. The period is $14$ (from $m=2$), and $m=15$ maps
back to $m=2$.

\smallskip\noindent $m=1$:
\begin{align*}
A_m &= -q, \qquad B_m = -q^{4} + 2 \, q^{3} + q^{2} + 1, \qquad C_m = q^{6}, \qquad K_m = -q, \qquad D_m = q^{2} + 1
\end{align*}
\smallskip\noindent $m=2$:
\begin{align*}
A_m &= q^{4} - q^{3} + 2 \, q^{2} - q + 2, \qquad B_m = q^{4} - 2 \, q^{3} + q^{2} + 1, \qquad C_m = -q^{3}, \qquad K_m = 2\\
D_m &= \frac{1}{2} \, q + 1
\end{align*}
\smallskip\noindent $m=3$:
\begin{align*}
A_m &= -\frac{1}{8} \, q^{4} + \frac{1}{8} \, q^{3} - \frac{1}{4} \, q^{2} + \frac{1}{8} \, q + \frac{1}{4}, \qquad B_m = \frac{1}{2} \, q^{5} - \frac{1}{2} \, q^{4} + 2 \, q^{3} + \frac{1}{2} \, q^{2} + 1\\
C_m &= -\frac{1}{2} \, q^{6} + \frac{1}{2} \, q^{5} - q^{4} + \frac{1}{2} \, q^{3} - q^{2}, \qquad K_m = \frac{1}{4}, \qquad D_m = -\frac{1}{2} \, q + 1
\end{align*}
\smallskip\noindent $m=4$:
\begin{align*}
A_m &= 2, \qquad B_m = -q^{4} - 3 \, q^{2} + 1, \qquad C_m = \frac{1}{2} \, q^{6} - \frac{1}{2} \, q^{5} + q^{4} - \frac{1}{2} \, q^{3} - q^{2}, \qquad K_m = 2\\
D_m &= 1
\end{align*}
\smallskip\noindent $m=5$:
\begin{align*}
A_m &= -q^{4} + q^{3} - 3 \, q^{2} + q - 1, \qquad B_m = q^{4} + 3 \, q^{2} + 1, \qquad C_m = -q^{2}, \qquad K_m = -1\\
D_m &= q + 1
\end{align*}
\smallskip\noindent $m=6$:
\begin{align*}
A_m &= -q^{4} - q^{2} - q, \qquad B_m = 2 \, q^{5} - q^{4} + 4 \, q^{3} + q^{2} + 1\\
C_m &= -q^{6} + q^{5} - 3 \, q^{4} + q^{3} - q^{2}, \qquad K_m = -q, \qquad D_m = 2 \, q^{2} - q + 1
\end{align*}
\smallskip\noindent $m=7$:
\begin{align*}
A_m &= -q^{4} + q^{3} - q^{2}, \qquad B_m = 2 \, q^{5} - q^{4} + 2 \, q^{3} + q^{2} + 1, \qquad C_m = -q^{6} - q^{4} - q^{3}\\
K_m &= -q^{2}, \qquad D_m = 2 \, q^{3} + q^{2} + q + 1
\end{align*}
\smallskip\noindent $m=8$:
\begin{align*}
A_m &= -q^{4} - q^{2} - q, \qquad B_m = 2 \, q^{5} - q^{4} + 2 \, q^{3} + q^{2} + 1, \qquad C_m = -q^{6} + q^{5} - q^{4}\\
K_m &= -q, \qquad D_m = 2 \, q^{2} - q + 1
\end{align*}
\smallskip\noindent $m=9$:
\begin{align*}
A_m &= -q^{4} + q^{3} - 3 \, q^{2} + q - 1, \qquad B_m = 2 \, q^{5} - q^{4} + 4 \, q^{3} + q^{2} + 1, \qquad C_m = -q^{6} - q^{4} - q^{3}\\
K_m &= -1, \qquad D_m = q + 1
\end{align*}
\smallskip\noindent $m=10$:
\begin{align*}
A_m &= -1, \qquad B_m = q^{4} + 3 \, q^{2} + 1, \qquad C_m = -q^{6} + q^{5} - 3 \, q^{4} + q^{3} - q^{2}, \qquad K_m = -1\\
D_m &= 1
\end{align*}
\smallskip\noindent $m=11$:
\begin{align*}
A_m &= -q^{4} + q^{3} - 2 \, q^{2} + q + 2, \qquad B_m = -q^{4} - 3 \, q^{2} + 1, \qquad C_m = -q^{2}, \qquad K_m = 2\\
D_m &= -\frac{1}{2} \, q + 1
\end{align*}
\smallskip\noindent $m=12$:
\begin{align*}
A_m &= \frac{1}{8} \, q^{4} - \frac{1}{8} \, q^{3} + \frac{1}{4} \, q^{2} - \frac{1}{8} \, q + \frac{1}{4}, \qquad B_m = \frac{1}{2} \, q^{5} - \frac{1}{2} \, q^{4} + 2 \, q^{3} + \frac{1}{2} \, q^{2} + 1\\
C_m &= \frac{1}{2} \, q^{6} - \frac{1}{2} \, q^{5} + q^{4} - \frac{1}{2} \, q^{3} - q^{2}, \qquad K_m = \frac{1}{4}, \qquad D_m = \frac{1}{2} \, q + 1
\end{align*}
\smallskip\noindent $m=13$:
\begin{align*}
A_m &= 2 \, q, \qquad B_m = q^{4} - 2 \, q^{3} + q^{2} + 1, \qquad C_m = -\frac{1}{2} \, q^{6} + \frac{1}{2} \, q^{5} - q^{4} + \frac{1}{2} \, q^{3} - q^{2}\\
K_m &= 2 \, q, \qquad D_m = q^{2} + 1
\end{align*}
\smallskip\noindent $m=14$:
\begin{align*}
A_m &= q^{4}, \qquad B_m = -q^{4} + 2 \, q^{3} + q^{2} + 1, \qquad C_m = -q^{3}, \qquad K_m = q^{4}\\
D_m &= -q^{4} + 2 \, q^{3} + q^{2} + 1
\end{align*}
\smallskip\noindent $m=15$:
\begin{align*}
A_m &= q, \qquad B_m = -q^{4} + 2 \, q^{3} + q^{2} + 1, \qquad C_m = -q^{6}, \qquad K_m = q, \qquad D_m = q^{2} + 1
\end{align*}

\section{Pass $1$}\label{sec:pass1}

This appendix lists the tuples of pass~$1$, indexed by $m=9j+r$
($r\in\{1,\dots,9\}$), starting from the initial tuple $y_1$ of \eqref{eq:b8} at
$m=1$. The formulas hold for every $n≥4$, even or odd. Each generic row is written with the range on which its triple
$(A,B,C)$ satisfies \eqref{eq:adm}, i.e.\ is polynomial with $B(0)=1$, $C(0)=0$
(see Remark~\ref{rem:exits} for the exact ranges).\footnote{This
is stronger than polynomiality: rows $4$, $8$, $9$ of pass~$1$ and rows $3$, $8$,
$9$ of pass~$2$ are still polynomial at $n-2j=h_r-1$, but there $C(0)\ne0$
(rows $4$, $8$ of pass~$1$ have $C=-1$, row~$9$ has $C(0)=1$; rows $3$, $8$ of
pass~$2$ have $C=-Q$, row~$9$ has $C(0)=-1$).} A row $\to$ the next one exactly where both triples satisfy \eqref{eq:adm}
(see \S\ref{ssec:onestep}), which determines where pass~$1$ ends
(Remark~\ref{rem:exits}). The rows $j=0$ are used for every $n≥4$.

\smallskip\noindent $m=1$, $m=9j+1$,  $j=0$:
\begin{align*}
A_m &= -q^{n - 2}, \qquad B_m = -\frac{{\left(q^{2} - 3 \, q + 1\right)} q^{n} + Q}{q - 1}, \qquad C_m = q^{n + 3}, \qquad K_m = -q^{n - 2}\\
D_m &= -\frac{Q - q^{n}}{q - 1}
\end{align*}
\smallskip\noindent $m=9j+1$, $j≥1$, $n-2j≥0$:
\begin{align*}
A_m &= -\frac{Q(q-2)^{2} q^{4j} + Q(q-2) q^{2j}\left(1+q^{n}\right) + q^{n+1}}{(q-1)^{2} q^{2j}}, \qquad B_m = -\frac{Q\left(2(q-2) q^{2j} + q^{n} + 1\right)}{q - 1}\\
C_m &= -Q q^{2 \, j}, \qquad K_m = 2, \qquad D_m = -\frac{1}{2} \, q + 1
\end{align*}
\smallskip\noindent $m=2$, $m=9j+2$,  $j=0$:
\begin{align*}
A_m &= \frac{\left(q^{2} - 2 q + 2\right) q^{n} + Q(q-2)}{q - 1}, \qquad B_m = \frac{{\left(q^{2} - 3 \, q + 3\right)} q^{n} - Q}{q - 1}, \qquad C_m = -q^{n}\\
K_m &= 2, \qquad D_m = \frac{1}{2} \, q + 1
\end{align*}
\smallskip\noindent $m=9j+2$, $j≥1$, $n-2j≥1$:
\begin{align*}
A_m &= \frac{Q q(q-2) q^{2j}\left(1+q^{n}\right) + Q q^{3} q^{4j} + (q-2)^{2} q^{n}}{8 q (q-1)^{2} q^{2j}}\\
B_m &= \frac{Q q^{2}(q-2) q^{4j} + Q\left(q^{2}-2q+2\right) q^{2j}\left(1+q^{n}\right) + q(q-2) q^{n}}{2 (q-1)^{2} q^{2j}}\\
C_m &= \frac{Q q^{2}(q-2)^{2} q^{4j} + Q q^{2}(q-2) q^{2j}\left(1+q^{n}\right) + q^{3} q^{n}}{2 (q-1)^{2} q^{2j}}, \qquad K_m = -\frac{1}{4}, \qquad D_m = -\frac{1}{2} \, q + 1
\end{align*}
\smallskip\noindent $m=3$, $m=9j+3$,  $j=0$:
\begin{align*}
A_m &= -\frac{Q q(q+2) + (q-2)\left(q^{2}+2\right) q^{n}}{8 q (q-1)}, \qquad B_m = \frac{Q\left(q^{2}-2\right) + \left(q^{3}-2q^{2}+4q-2\right) q^{n}}{2 (q - 1)}\\
C_m &= -\frac{q^{2}\left(Q(q-2) + \left(q^{2}-2q+2\right) q^{n}\right)}{2 (q - 1)}, \qquad K_m = \frac{1}{4}, \qquad D_m = -\frac{1}{2} \, q + 1
\end{align*}
\smallskip\noindent $m=9j+3$, $j≥1$, $n-2j≥3$:
\begin{align*}
A_m &= 2 \, q^{-2 \, j + n - 3}, \qquad B_m = -\frac{Q q\, q^{2j}\left(1+q^{n}\right) + 2(q-2) q^{n}}{q(q-1) q^{2j}}\\
C_m &= \frac{Q q^{2}(q-2) q^{2j}\left(1+q^{n}\right) + Q q^{4} q^{4j} + q(q-2)^{2} q^{n}}{2 (q-1)^{2} q^{2j}}, \qquad K_m = 2 \, q^{-2 \, j + n - 3}\\
D_m &= -\frac{Q q^{2 \, j + 1} - q^{n}}{{\left(q^{2} - q\right)} q^{2 \, j}}
\end{align*}
\smallskip\noindent $m=4$, $m=9j+4$,  $j=0$:
\begin{align*}
A_m &= 2 \, q^{n - 3}, \qquad B_m = -\frac{{\left(q^{3} - q^{2} + 3 \, q - 4\right)} q^{n} + Q q}{q^{2} - q}\\
C_m &= \frac{q\left(Q q(q+2) + (q-2)\left(q^{2}+2\right) q^{n}\right)}{2 (q - 1)}, \qquad K_m = 2 \, q^{n - 3}, \qquad D_m = -\frac{Q q - q^{n}}{q^{2} - q}
\end{align*}
\smallskip\noindent $m=9j+4$, $j≥1$, $n-2j≥2$:
\begin{align*}
A_m &= \frac{Q q^{2}(q-1) q^{4j} + Q q\, q^{2j}\left(1-q^{n}\right) - (q-1) q^{n}}{q(q-1) q^{2j}}, \qquad B_m = -\frac{Q q\, q^{2j}\left(1-q^{n}\right) - 2(q-1) q^{n}}{q(q-1) q^{2j}}\\
C_m &= -q^{-2 \, j + n - 1}, \qquad K_m = -1, \qquad D_m = 1
\end{align*}
\smallskip\noindent $m=5$, $m=9j+5$,  $j=0$:
\begin{align*}
A_m &= \frac{Q^{2} q - {\left(q^{3} - q^{2} + 2 \, q - 1\right)} q^{n}}{q^{2} - q}, \qquad B_m = \frac{{\left(q^{3} - q^{2} + 3 \, q - 2\right)} q^{n} - Q q}{q^{2} - q}\\
C_m &= -q^{n - 1}, \qquad K_m = -1, \qquad D_m = q + 1
\end{align*}
\smallskip\noindent $m=9j+5$, $j≥1$, $n-2j≥0$:
\begin{align*}
A_m &= Q q^{2 \, j - 1}, \qquad B_m = -\frac{Q\left(2 q(q-1) q^{2j} - q^{n} + 1\right)}{q - 1}\\
C_m &= \frac{Q q^{3}(q-1) q^{4j} + Q q^{2} q^{2j}\left(1-q^{n}\right) - q(q-1) q^{n}}{(q - 1) q^{2j}}, \qquad K_m = q^{2 \, j - 1}, \qquad D_m = \frac{q^{2 \, j + 1} - 1}{q - 1}
\end{align*}
\smallskip\noindent $m=6$, $m=9j+6$,  $j=0$:
\begin{align*}
A_m &= \frac{Q q(q+1) - \left(q^{2}+1\right) q^{n}}{q - 1}, \qquad B_m = -\frac{Q\left(2 q^{3}+1\right) - \left(2 q^{3}-q^{2}+3q-1\right) q^{n}}{q - 1}\\
C_m &= \frac{q\left(Q^{2} q - \left(q^{3}-q^{2}+2q-1\right) q^{n}\right)}{q - 1}, \qquad K_m = -q, \qquad D_m = Q
\end{align*}
\smallskip\noindent $m=9j+6$, $j≥1$, $n-2j≥0$:
\begin{align*}
A_m &= -\frac{Q q^{3} q^{4j} - Q q\, q^{2j}\left(1+q^{n}\right) + q^{n}}{(q-1)^{2} q^{2j}}, \qquad B_m = \frac{Q\left(2 q^{2} q^{2j} - q^{n} - 1\right)}{q - 1}, \qquad C_m = -Q q^{2 \, j + 1}\\
K_m &= q, \qquad D_m = -q + 1
\end{align*}
\smallskip\noindent $m=9j+7$, $j≥0$, $n-2j≥4$:
\begin{align*}
A_m &= q^{-2 \, j + n - 4}, \qquad B_m = -\frac{Q q\, q^{2j}\left(1+q^{n}\right) - 2 q^{n}}{q(q-1) q^{2j}}\\
C_m &= \frac{Q q^{5} q^{4j} - Q q^{3} q^{2j}\left(1+q^{n}\right) + q^{2} q^{n}}{(q-1)^{2} q^{2j}}, \qquad K_m = q^{-2 \, j + n - 4}, \qquad D_m = -\frac{Q q^{2 \, j + 2} - q^{n}}{{\left(q^{3} - q^{2}\right)} q^{2 \, j}}
\end{align*}
\smallskip\noindent $m=9j+8$, $j≥0$, $n-2j≥3$:
\begin{align*}
A_m &= \frac{Q q^{4}(q-1) q^{4j} - Q q^{2} q^{2j}\left(1-q^{n}\right) - (q-1) q^{n}}{q^{2}(q-1) q^{2j}}\\
B_m &= -\frac{Q q^{2} q^{2j}\left(1-q^{n}\right) + 2(q-1) q^{n}}{q^{2}(q-1) q^{2j}}, \qquad C_m = -q^{-2 \, j + n - 2}, \qquad K_m = 1, \qquad D_m = 1
\end{align*}
\smallskip\noindent $m=9j+9$, $j≥0$, $n-2j≥1$:
\begin{align*}
A_m &= -Q q^{2 \, j}, \qquad B_m = \frac{Q\left(2 q^{2}(q-1) q^{2j} + q^{n} - 1\right)}{q - 1}\\
C_m &= -\frac{Q q^{4}(q-1) q^{4j} - Q q^{2} q^{2j}\left(1-q^{n}\right) - (q-1) q^{n}}{(q - 1) q^{2j}}, \qquad K_m = -q^{2 \, j}\\
D_m &= \frac{q^{2 \, j + 2} - 1}{q - 1}
\end{align*}

\section{Pass $2$}\label{sec:pass2}

This appendix lists the tuples of pass~$2$, indexed by $m=9j+r$
($r\in\{1,\dots,9\}$) and written in terms of $q^{2j}$ and $q^n$. Here $m$ is an index local to pass~$2$: where
pass~$2$ sits in the sequence, and at which $j$ its rows are read, is described in
\S\ref{ssec:tables}. The formulas hold for every $n≥4$, even or
odd. As in Appendix~\ref{sec:pass1}, each row is written with the range on
which its triple $(A,B,C)$ satisfies \eqref{eq:adm}. A bound $j≥\tfrac12$ means $j≥1$
when the rows are read at integer $j$ (even $n$).

\smallskip\noindent $m=9j+1$, $j≥1$, $n-2j≥0$:
\begin{align*}
A_m &= \frac{Q q\left(q^{2j} + (q^{2j}-1) q^{n}\right) - q^{4 j + 2}}{(q-1)^{2} q^{2 j}}, \qquad B_m = -\frac{Q(q^{n}+1) - 2 q^{2 j + 1}}{q - 1}, \qquad C_m = -q^{2 \, j}\\
K_m &= q, \qquad D_m = -q + 1
\end{align*}
\smallskip\noindent $m=9j+2$, $j≥\tfrac12$, $n-2j≥3$:
\begin{align*}
A_m &= Q q^{-2 \, j + n - 3}, \qquad B_m = -\frac{Q\left(q^{2j} + (q^{2j}-2) q^{n}\right)}{(q-1) q^{2 j}}\\
C_m &= -\frac{Q q^{3}\left(q^{2j} + (q^{2j}-1) q^{n}\right) - q^{4 j + 4}}{(q-1)^{2} q^{2 j}}, \qquad K_m = q^{-2 \, j + n - 3}, \qquad D_m = -\frac{q^{2 \, j + 1} - q^{n}}{{\left(q^{2} - q\right)} q^{2 \, j}}
\end{align*}
\smallskip\noindent $m=9j+3$, $j≥\tfrac12$, $n-2j≥2$:
\begin{align*}
A_m &= \frac{q^{2}(q-1) q^{4j} + Q\left((q^{2j+1}-q+1) q^{n} - q^{2j+1}\right)}{(q^{2}-q) q^{2 j}}\\
B_m &= -\frac{Q\left(q^{2j+1} - (q^{2j+1}-2q+2) q^{n}\right)}{(q^{2}-q) q^{2 j}}, \qquad C_m = -Q q^{-2 \, j + n - 1}, \qquad K_m = 1\\
D_m &= 1
\end{align*}
\smallskip\noindent $m=9j+4$, $j≥\tfrac12$, $n-2j≥0$:
\begin{align*}
A_m &= -q^{2 \, j - 1}, \qquad B_m = \frac{2 q(q-1) q^{2j} + Q(q^{n}-1)}{q - 1}\\
C_m &= -\frac{(q^{4}-q^{3}) q^{4j} - Q\left(q^{2j+2} + (q^{2}-q-q^{2j+2}) q^{n}\right)}{(q-1) q^{2 j}}, \qquad K_m = -q^{2 \, j - 1}\\
D_m &= -\frac{Q - q^{2 \, j + 1}}{q - 1}
\end{align*}
\smallskip\noindent $m=9j+5$, $j≥\tfrac12$, $n-2j≥0$:
\begin{align*}
A_m &= -\frac{q(q-2)^{2} q^{4j} + Q(q-2) q^{2j}(1+q^{n}) + Q q^{n}}{(q-1)^{2} q^{2 j}}, \qquad B_m = -\frac{2 q(q-2) q^{2j} + Q(q^{n}+1)}{q - 1}\\
C_m &= -q^{2 \, j + 1}, \qquad K_m = 2, \qquad D_m = -\frac{1}{2} \, q + 1
\end{align*}
\smallskip\noindent $m=9j+6$, $j≥0$, $n-2j≥2$:
\begin{align*}
A_m &= \frac{Q(q-2)\left(q^{2j+2} + (q^{2j+2}+q-2) q^{n}\right) + q^{4 j + 5}}{8 q^{2}(q-1)^{2} q^{2 j}}\\
B_m &= \frac{(q^{4}-2q^{3}) q^{4j} + Q(q^{2}-2q+2) q^{2j}(1+q^{n}) + Q(q-2) q^{n}}{2(q-1)^{2} q^{2 j}}\\
C_m &= \frac{(q^{5}-4q^{4}+4q^{3}) q^{4j} + Q q^{2}(q-2) q^{2j}(1+q^{n}) + Q q^{2} q^{n}}{2(q-1)^{2} q^{2 j}}, \qquad K_m = -\frac{1}{4}\\
D_m &= -\frac{1}{2} \, q + 1
\end{align*}
\smallskip\noindent $m=9j+7$, $j≥0$, $n-2j≥4$:
\begin{align*}
A_m &= 2 \, Q q^{-2 \, j + n - 4}, \qquad B_m = -\frac{Q\left(q^{2j+2} + (q^{2j+2}+2q-4) q^{n}\right)}{(q^{3}-q^{2}) q^{2 j}}\\
C_m &= \frac{Q(q-2)\left(q^{2j+2} + (q^{2j+2}+q-2) q^{n}\right) + q^{4 j + 5}}{2(q-1)^{2} q^{2 j}}, \qquad K_m = 2 \, q^{-2 \, j + n - 4}\\
D_m &= -\frac{q^{2 \, j + 2} - q^{n}}{{\left(q^{3} - q^{2}\right)} q^{2 \, j}}
\end{align*}
\smallskip\noindent $m=9j+8$, $j≥0$, $n-2j≥3$:
\begin{align*}
A_m &= \frac{(q^{5}-q^{4}) q^{4j} + Q\left(q^{2j+2} - (q^{2j+2}+q-1) q^{n}\right)}{(q^{3}-q^{2}) q^{2 j}}\\
B_m &= -\frac{Q\left(q^{2j+2} - (q^{2j+2}+2q-2) q^{n}\right)}{(q^{3}-q^{2}) q^{2 j}}, \qquad C_m = -Q q^{-2 \, j + n - 2}, \qquad K_m = -1\\
D_m &= 1
\end{align*}
\smallskip\noindent $m=9j+9$, $j≥0$, $n-2j≥1$:
\begin{align*}
A_m &= q^{2 \, j}, \qquad B_m = -\frac{2 q^{2}(q-1) q^{2j} - Q(q^{n}-1)}{q - 1}\\
C_m &= \frac{(q^{5}-q^{4}) q^{4j} + Q\left(q^{2j+2} - (q^{2j+2}+q-1) q^{n}\right)}{(q-1) q^{2 j}}, \qquad K_m = q^{2 \, j}, \qquad D_m = -\frac{Q - q^{2 \, j + 2}}{q - 1}
\end{align*}

\section{Bridge $1$}\label{sec:bridge}

This appendix lists the eight tuples $\mathrm{b}1,\dots,\mathrm{b}8$ of
bridge~$1$. They do not involve $j$, and the formulas hold for every $n≥4$, even or odd.

\smallskip\noindent $\mathrm{b}1$:
\begin{align*}
A_{\mathrm{b}1} &= -\frac{Q q (q + 1) - (q^{2} + 1) q^{n}}{q - 1}, \qquad B_{\mathrm{b}1} = -\frac{{\left(q^{3} - q^{2} + q - 2\right)} q^{n} + Q q}{q^{2} - q}, \qquad C_{\mathrm{b}1} = -q^{n - 2}\\
K_{\mathrm{b}1} &= q, \qquad D_{\mathrm{b}1} = Q
\end{align*}
\smallskip\noindent $\mathrm{b}2$:
\begin{align*}
A_{\mathrm{b}2} &= \frac{Q^{2} q - {\left(q^{3} - q^{2} + 2 \, q - 1\right)} q^{n}}{q^{2} - q}, \qquad B_{\mathrm{b}2} = -\frac{Q (2 q^{3} + 1) - (2 \, q^{3} - q^{2} + 3 \, q - 1) q^{n}}{q - 1}\\
C_{\mathrm{b}2} &= \frac{Q q^{3}(q + 1) - q^{2}(q^{2} + 1) q^{n}}{q - 1}, \qquad K_{\mathrm{b}2} = -1, \qquad D_{\mathrm{b}2} = q + 1
\end{align*}
\smallskip\noindent $\mathrm{b}3$:
\begin{align*}
A_{\mathrm{b}3} &= -q^{n - 3}, \qquad B_{\mathrm{b}3} = \frac{{\left(q^{3} - q^{2} + 3 \, q - 2\right)} q^{n} - Q q}{q^{2} - q}\\
C_{\mathrm{b}3} &= \frac{Q^{2} q^{2} - {\left(q^{4} - q^{3} + 2 \, q^{2} - q\right)} q^{n}}{q - 1}, \qquad K_{\mathrm{b}3} = -q^{n - 3}, \qquad D_{\mathrm{b}3} = -\frac{Q q - q^{n}}{q^{2} - q}
\end{align*}
\smallskip\noindent $\mathrm{b}4$:
\begin{align*}
A_{\mathrm{b}4} &= -\frac{Q q (q + 2) + (q - 2)(q^{2} + 2) q^{n}}{q^{2} - q}, \qquad B_{\mathrm{b}4} = -\frac{{\left(q^{3} - q^{2} + 3 \, q - 4\right)} q^{n} + Q q}{q^{2} - q}\\
C_{\mathrm{b}4} &= -q^{n - 1}, \qquad K_{\mathrm{b}4} = 2, \qquad D_{\mathrm{b}4} = -\frac{1}{2} \, q + 1
\end{align*}
\smallskip\noindent $\mathrm{b}5$:
\begin{align*}
A_{\mathrm{b}5} &= \frac{(q^{2} - 2 \, q + 2) q^{n} + Q (q - 2)}{8 \, {\left(q - 1\right)}}, \qquad B_{\mathrm{b}5} = \frac{Q (q^{2} - 2) + (q^{3} - 2 \, q^{2} + 4 \, q - 2) q^{n}}{2 \, {\left(q - 1\right)}}\\
C_{\mathrm{b}5} &= \frac{Q q^{2}(q + 2) + q (q - 2)(q^{2} + 2) q^{n}}{2 \, {\left(q - 1\right)}}, \qquad K_{\mathrm{b}5} = \frac{1}{4}, \qquad D_{\mathrm{b}5} = \frac{1}{2} \, q + 1
\end{align*}
\smallskip\noindent $\mathrm{b}6$:
\begin{align*}
A_{\mathrm{b}6} &= 2 \, q^{n - 2}, \qquad B_{\mathrm{b}6} = \frac{{\left(q^{2} - 3 \, q + 3\right)} q^{n} - Q}{q - 1}, \qquad C_{\mathrm{b}6} = -\frac{q^{2}\left(Q (q - 2) + (q^{2} - 2 q + 2) q^{n}\right)}{2 \, {\left(q - 1\right)}}\\
K_{\mathrm{b}6} &= 2 \, q^{n - 2}, \qquad D_{\mathrm{b}6} = -\frac{Q - q^{n}}{q - 1}
\end{align*}
\smallskip\noindent $\mathrm{b}7$:
\begin{align*}
A_{\mathrm{b}7} &= q^{n + 1}, \qquad B_{\mathrm{b}7} = -\frac{{\left(q^{2} - 3 \, q + 1\right)} q^{n} + Q}{q - 1}, \qquad C_{\mathrm{b}7} = -q^{n}, \qquad K_{\mathrm{b}7} = q^{n + 1}\\
D_{\mathrm{b}7} &= -\frac{{\left(q^{2} - 3 \, q + 1\right)} q^{n} + Q}{q - 1}
\end{align*}
\smallskip\noindent $\mathrm{b}8$:
\begin{align*}
A_{\mathrm{b}8} &= q^{n - 2}, \qquad B_{\mathrm{b}8} = -\frac{{\left(q^{2} - 3 \, q + 1\right)} q^{n} + Q}{q - 1}, \qquad C_{\mathrm{b}8} = -q^{n + 3}, \qquad K_{\mathrm{b}8} = q^{n - 2}\\
D_{\mathrm{b}8} &= -\frac{Q - q^{n}}{q - 1}
\end{align*}

\section{Bridge $2$}\label{sec:bridge2}

This appendix lists the five tuples $\mathrm{c}1,\dots,\mathrm{c}5$ of
bridge~$2$. They do not involve $j$, and the formulas hold for every $n≥4$, even or odd.

\smallskip\noindent $\mathrm{c}1$:
\begin{align*}
A_{\mathrm{c}1} &= \frac{Q q^{n} + (2q-1)(q-2)}{8 \, (q-1)}, \qquad B_{\mathrm{c}1} = \frac{2 \, q^{3} + Q q^{n + 1} - 3 \, q^{2} + 2 \, q - 2}{2 \, {\left(q - 1\right)}}\\
C_{\mathrm{c}1} &= \frac{q^{2}(2q^{2}-q+2) + Q(q-2)(q+2) q^{n}}{2 \, (q-1)}, \qquad K_{\mathrm{c}1} = -\frac{1}{4}, \qquad D_{\mathrm{c}1} = \frac{3}{2} \, q + 1
\end{align*}
\smallskip\noindent $\mathrm{c}2$:
\begin{align*}
A_{\mathrm{c}2} &= \frac{2\left(2 q(q^{2}-q-1) + Q(q+1) q^{n}\right)}{q(q-1)}, \qquad B_{\mathrm{c}2} = -\frac{4 q^{3} - 7 q^{2} - q + 1 + Q(2q+1) q^{n}}{q - 1}\\
C_{\mathrm{c}2} &= \frac{q^{2}\left((2q-1)(q-2) + Q q^{n}\right)}{2 \, (q-1)}, \qquad K_{\mathrm{c}2} = 4, \qquad D_{\mathrm{c}2} = -2 \, q + 1
\end{align*}
\smallskip\noindent $\mathrm{c}3$:
\begin{align*}
A_{\mathrm{c}3} &= \frac{1}{2} \, Q q^{n - 3}, \qquad B_{\mathrm{c}3} = \frac{Q q^{n} - Q q}{q^{2} - q}, \qquad C_{\mathrm{c}3} = -\frac{2 q^{2}(q^{2}-q-1) + Q q(q+1) q^{n}}{2 \, (q-1)}\\
K_{\mathrm{c}3} &= \frac{1}{2} \, q^{n - 3}, \qquad D_{\mathrm{c}3} = \frac{q^{n} - q}{q^{2} - q}
\end{align*}
\smallskip\noindent $\mathrm{c}4$:
\begin{align*}
A_{\mathrm{c}4} &= \frac{2 q(q^{2}-q-1) + Q(q+1) q^{n}}{4 \, (q^{2}-q)}, \qquad B_{\mathrm{c}4} = \frac{Q q^{n} - Q q}{q^{2} - q}, \qquad C_{\mathrm{c}4} = -Q q^{n - 1}\\
K_{\mathrm{c}4} &= \frac{1}{2}, \qquad D_{\mathrm{c}4} = -2 \, q + 1
\end{align*}
\smallskip\noindent $\mathrm{c}5$:
\begin{align*}
A_{\mathrm{c}5} &= -\frac{2\left(Q q^{n} + (2q-1)(q-2)\right)}{q - 1}, \qquad B_{\mathrm{c}5} = -\frac{4 q^{3} - 7 q^{2} - q + 1 + Q(2q+1) q^{n}}{q - 1}\\
C_{\mathrm{c}5} &= -\frac{2 q^{2}(q^{2}-q-1) + Q q(q+1) q^{n}}{2 \, (q-1)}, \qquad K_{\mathrm{c}5} = 4, \qquad D_{\mathrm{c}5} = \frac{3}{2} \, q + 1
\end{align*}

\section{Bridge $3$}\label{sec:bridge3}

This appendix lists the three tuples $\mathrm{d}1,\mathrm{d}2,\mathrm{d}3$ of
bridge~$3$. They do not involve $j$, and the formulas hold for every $n≥4$, even or odd.

\smallskip\noindent $\mathrm{d}1$:
\begin{align*}
A_{\mathrm{d}1} &= \frac{Q q^{n} - q}{q - 1}, \qquad B_{\mathrm{d}1} = -\frac{Q\bigl((q-2)q^n + q\bigr)}{q^2-q}, \qquad C_{\mathrm{d}1} = -Q q^{n - 2}, \qquad K_{\mathrm{d}1} = q\\
D_{\mathrm{d}1} &= Q
\end{align*}
\smallskip\noindent $\mathrm{d}2$:
\begin{align*}
A_{\mathrm{d}2} &= -Q q^{n - 1}, \qquad B_{\mathrm{d}2} = \frac{Q\bigl((2q-1)q^n - 1\bigr)}{q-1}, \qquad C_{\mathrm{d}2} = \frac{q^{3} - Q q^{n + 2}}{q - 1}, \qquad K_{\mathrm{d}2} = -q^{n - 1}\\
D_{\mathrm{d}2} &= \frac{{\left(2 \, q - 1\right)} q^{n} - 1}{q - 1}
\end{align*}
\smallskip\noindent $\mathrm{d}3$:
\begin{align*}
A_{\mathrm{d}3} &= -\frac{Q q^{n} - q}{q - 1}, \qquad B_{\mathrm{d}3} = \frac{Q\bigl((2q-1)q^n - 1\bigr)}{q-1}, \qquad C_{\mathrm{d}3} = -Q q^{n + 1}, \qquad K_{\mathrm{d}3} = -q\\
D_{\mathrm{d}3} &= Q
\end{align*}

\bigskip
\noindent\textbf{Acknowledgements.} We used Claude Opus~5.5 (Anthropic) to draft parts of §§3--4 and to write the SageMath certification programs; all results were checked by the authors.


\begin{thebibliography}{MGO25}

\bibitem[Bar12]{Barry12}
P.~\bgroup\sc Barry\egroup{}.
\newblock On sequences with {$\{-1,0,1\}$} {H}ankel transforms.
\newblock arXiv{\string:}1205.2565, 2012.

\bibitem[EO26]{EO}
P.~\bgroup\sc Etingof\egroup{} \andname{} V.~\bgroup\sc Ovsienko\egroup{}.
\newblock Coefficients of $q$-real numbers: their combinatorial meaning and
  growth.
\newblock arXiv{\string:}2608.06761, 2026.

\bibitem[Eti25]{Etingof}
P.~\bgroup\sc Etingof\egroup{}.
\newblock On $q$-real and $q$-complex numbers.
\newblock arXiv{\string:}2508.08440, 2025.

\bibitem[Han16]{Han16}
G.-N. \bgroup\sc Han\egroup{}.
\newblock Hankel continued fraction and its applications.
\newblock {\em Adv. Math.} {\bf 303} (2016), 295--321.

\bibitem[HP25]{HP}
G.-N. \bgroup\sc Han\egroup{} \andname{} E.~\bgroup\sc Pedon\egroup{}.
\newblock Continued fractions and Hankel determinants for $q$-metallic numbers.
\newblock arXiv{\string:}2502.05993, 2025.

\bibitem[MGO20]{MGO20}
S.~\bgroup\sc Morier-Genoud\egroup{} \andname{} V.~\bgroup\sc
  Ovsienko\egroup{}.
\newblock $q$-deformed rationals and $q$-continued fractions.
\newblock {\em Forum Math. Sigma} {\bf 8} (2020), Paper No. e13, 55 pages.

\bibitem[MGO22]{MGO22}
S.~\bgroup\sc Morier-Genoud\egroup{} \andname{} V.~\bgroup\sc
  Ovsienko\egroup{}.
\newblock On $q$-deformed real numbers.
\newblock {\em Exp. Math.} {\bf 31}, 2 (2022), 652--660.

\bibitem[MGO25]{MGO}
S.~\bgroup\sc Morier-Genoud\egroup{} \andname{} V.~\bgroup\sc
  Ovsienko\egroup{}.
\newblock $q$-deformed rationals and irrationals.
\newblock arXiv{\string:}2503.23834, 2025.

\bibitem[OP25]{OP25}
V.~\bgroup\sc Ovsienko\egroup{} \andname{} E.~\bgroup\sc Pedon\egroup{}.
\newblock Continued fractions for {$q$}-deformed real numbers,
  {$\{-1,0,1\}$}-{H}ankel determinants, and {S}omos-{G}ale-{R}obinson
  sequences.
\newblock {\em Adv. in Appl. Math.} {\bf 162} (2025), Paper No. 102788, 37
  pages.

\bibitem[Ped26]{Pedon}
E.~\bgroup\sc Pedon\egroup{}.
\newblock Analytical properties of $q$-metallic numbers.
\newblock arXiv{\string:}2604.19898, 2026.

\bibitem[SX08]{SX08}
R.~A. \bgroup\sc Sulanke\egroup{} \andname{} G.~\bgroup\sc Xin\egroup{}.
\newblock Hankel determinants for some common lattice paths.
\newblock {\em Adv. in Appl. Math.} {\bf 40} (2008), 149--167.

\end{thebibliography}
\end{document}